\crefname{equation}{}{}
\newcommand{\R}{\mathbb{R}}
\newcommand{\N}{\mathbb{N}}
\newcommand{\ee}{\mathrm{e}}
\DeclareDocumentCommand\dd{ o g d() }{
	\IfNoValueTF{#2}{
		\IfNoValueTF{#3}
			{\mathrm{d}\IfNoValueTF{#1}{}{^{#1}}}
			{\mathinner{\mathrm{d}\IfNoValueTF{#1}{}{^{#1}}\argopen(#3\argclose)}}
		}
		{\mathinner{\mathrm{d}\IfNoValueTF{#1}{}{^{#1}}#2} \IfNoValueTF{#3}{}{(#3)}}
	}
\newcommand{\dx}{\dd{x}}
\newcommand{\dy}{\dd{y}}
\newcommand{\dz}{\dd{z}}
\newcommand{\del}{\partial}
\newcommand{\eps}{\varepsilon}
\newcommand{\vcc}{\vcentcolon}
\DeclarePairedDelimiter\abs{\lvert}{\rvert}
\DeclarePairedDelimiter\norm{\Vert}{\rVert}
\theoremstyle{plain}
\newtheorem{theorem}{Theorem}[section]
\newtheorem{lemma}[theorem]{Lemma}
\newtheorem{proposition}[theorem]{Proposition}
\theoremstyle{definition}
\newtheorem{definition}[theorem]{Definition}
\theoremstyle{remark}
\newtheorem{remark}[theorem]{Remark}
\numberwithin{equation}{section}
\title{Precise tail behaviour of self-similar profiles with infinite mass for Smoluchowski's coagulation equation}
\author{Sebastian Throm
  \thanks{E-mail: \texttt{throm@ma.tum.de}}}
\affil{Faculty of Mathematics, Technical University of Munich, Boltzmannstraße 3, 85748 Garching bei München, Germany}
\date{}
\begin{document}
 
\maketitle

\begin{abstract}
 We consider self-similar profiles to Smoluchowski's coagulation equation for which we derive the precise asymptotic behaviour at infinity. More precisely, we look at so-called fat-tailed profiles which decay algebraically and as a consequence have infinite total mass. The results only require mild assumptions on the coagulation kernel and thus cover a large class of rate kernels.
\end{abstract}

\section{Introduction}

Smoluchowski's coagulation equation is a kinetic model which describes the irreversible aggregation of clusters in particle systems that may originate from many different applications in physics, chemistry or biology. This model was originally derived by Marian von Smoluchowski in 1916 in the context of coagulation of gold particles in a colloidal solution which move according to Brownian motion (\cite{Smo16,Smo17}). The basic assumptions for the derivation of the equation are that each particle is completely characterised by a scalar quantity $x\in(0,\infty)$ which is usually denoted as the size or mass of the cluster. Moreover, one assumes that the state of the whole system is fully described by the density $\phi(x,t)$ of clusters of size $x$ at time $t$. Finally, one assumes that the system is dilute in the sense that in each coagulation process only two clusters are involved which means that the effect of higher order collisions can be neglected as a rare event.

The time evolution of such systems is described by Smoluchowski's coagulation equation which reads as
\begin{equation}\label{eq:Smol}
 \del_{t}\phi(x,t)=\frac{1}{2}\int_{0}^{x}K(x-y,x)\phi(x-y,t)\phi(y,t)\dy-\phi(x,t)\int_{0}^{\infty}K(x,y)\phi(y,t)\dy.
\end{equation}
The first integral on the right-hand side is denoted as the gain term and accounts for the creation of clusters of size $x$ from clusters of sizes $x-y$ and $y$ while the factor $1/2$ is due the symmetry of the process. The second integral on the right-hand side however corresponds to the loss of particles of size $x$ due to the aggregation process and may thus be denoted as the loss term. 

Besides the specific application which Smoluchowski had in mind when he derived~\eqref{eq:Smol}, the same model is used to describe a broad range of other phenomena in many different fields. More details on the model and its applications can be found in~\cite{LaM04,Dra72}.

\subsection{Assumptions on the kernel $K$}

As one can see from~\eqref{eq:Smol}, the dynamics of the system is completely determined by the integral kernel $K$. One prominent example of such a kernel is the one that Smoluchowski derived himself for the gold particles and which reads as
\begin{equation}\label{eq:Smol:kernel}
 K(x,y)=\bigl(x^{1/3}+y^{1/3}\bigr)\bigl(x^{-1/3}+y^{-1/3}\bigr).
\end{equation}
The quantity $x^{1/3}$ corresponds to the radius, while $x^{-1/3}$ relates to the diffusion constant of a spherical cluster of size $x$.

Throughout this work we assume that the integral kernel $K$ is a continuous function on $\R_{+}\times\R_{+}$ which is symmetric and homogeneous of degree $\lambda\in(-1,1)$, i.e.\@
\begin{equation}\label{eq:Ass:ker:1}
 K(x,y)=K(y,x) \quad \text{and}\quad K(rx,ry)=r^{\lambda}K(x,y) \quad \text{for all }r,x,y\in\R_{+}.
\end{equation}
Moreover, we require some upper and lower bound on the kernel $K$ and we precisely assume that there exist constants $c_{*}, C_{*}>0$ and $\alpha,\beta\in(-1,1)$ with $\alpha\leq \beta$ and $\lambda=\alpha+\beta$ as well as $b,B\in (0,\infty)$ with $b<B$ such that it holds
\begin{equation}\label{eq:Ass:ker:2}
 \inf_{x,y\in[b,B]}K(x,y)\geq c_{*}\qquad \text{and}\qquad 0\leq K(x,y)\leq C_{*}\bigl(x^{\alpha}y^{\beta}+x^{\beta}y^{\alpha}\bigr).
\end{equation}
\begin{remark}
Note that for $c_{*}=2$, $C_{*}=3$ and $\alpha=-1/3$, $\beta=1/3$ this covers in particular the case~\eqref{eq:Smol:kernel} for each choice of $b,B\in(0,\infty)$ with $b<B$.
\end{remark}
\begin{remark}
 Note that the well-posedness of~\eqref{eq:Smol} has been shown for example in~\cite{Nor99,FoL06a} for a broad range of rate kernels.
\end{remark}

\subsection{Self-similar profiles}\label{Sec:profiles}

One particular point of interest concerns the behaviour of solutions to~\eqref{eq:Smol} for $t\to\infty$. For integral kernels satisfying~\eqref{eq:Ass:ker:1} one expects that the long-time behaviour of solutions to~\eqref{eq:Smol} is self-similar. Precisely this means that there exist solutions $\phi$ for~\eqref{eq:Smol} which are of the form
\begin{equation}\label{eq:ansatz}
 \phi(x,t)=\frac{1}{(s(t))^{\theta}}f\biggl(\frac{x}{s(t)}\biggr)\quad \text{with }s(t)\longrightarrow \infty \text{ for }t\longrightarrow \infty.
\end{equation}
The so-called scaling hypothesis then states that the long-time behaviour of solutions $\phi$ to~\eqref{eq:Smol} is given by the profile $f$ in the sense that
\begin{equation}\label{eq:longtime:convergence}
 \bigl(s(t)\bigr)^{\theta}\phi(s(t)x,t)\longrightarrow f(x)\quad \text{for }t\longrightarrow \infty.
\end{equation}
This conjecture is unproven for kernels that one typically finds in applications such as~\eqref{eq:Smol:kernel}. However, there are three specific kernels namely the so-called solvable ones (i.e.\@ $K\equiv 2$, $K(x,y)=x+y$ and $K(x,y)=xy$) for which the scaling hypothesis is known to be true due to the fact that explicit solution formulas are available in terms of the Laplace transform. Thus, these kernels allow to verify~\eqref{eq:longtime:convergence} in terms of weak convergence of measures (see~\cite{MeP04}). Moreover, in~\cite{MeP04} it is also shown that there exists a unique profile $f_{1}$ with exponential decay at infinity while on the other hand one has a whole family of self-similar profiles $f_{\rho}$ which in general decay algebraically. If one considers for example the constant kernel $K\equiv 2$, one finds that up to rescaling $f_{1}$ is given by $f_{1}(x)=\ee^{-x}$, while the profiles $f_{\rho}$ for $\rho\in(0,1)$ have the asymptotic behaviour $f_{\rho}\sim x^{-1-\rho}$ for $x\to\infty$.

The existence of self-similar profiles with fast decay has been established for many non-solvable kernels in~\cite{FoL05,EMR05} while existence of fat-tailed profiles could also be established recently for a broad class of coagulation kernels in~\cite{NiV13a,NTV16a}. The general approach for the latter has been to apply some fixed-point argument to the set of non-negative measures $\mu$ such that $\int_{0}^{R}x\mu(\dx)\sim R^{1-\rho}$ for $R\to \infty$. This means that the fat-tailed profiles have already been constructed as measures with a specific (weak) tail behaviour at infinity. Since there is up to now no general uniqueness statement for self-similar profiles available, one might think of the possibility that there exist other profiles with a different decay at infinity.

That this cannot be the case is exactly the goal of this work, i.e.\@ we show rigorously that each fat-tailed self-similar profile (up to a certain rescaling) has the decay behaviour as described before. 

As already indicated, except for special cases the question of uniqueness of self-similar profiles is completely open for non-solvable kernels. However, one example where uniqueness of self-similar profiles could be established recently is a perturbed model of the constant kernel, i.e.\@ $K=2+\eps W(x,y)$ with sufficiently small $\eps>0$ and $W$ being analytic and behaving in a certain sense like $(x/y)^{\alpha}+(y/x)^{\alpha}$ with $\alpha\in[0,1/2)$ (for the precise conditions we refer to~\cite{NTV16}). For such kernels it is possible to show uniqueness of self-similar profiles both in the case of finite mass (\cite{NTV16}) and for fat-tailed profiles (\cite{Thr16}). The proof is based in both cases on Laplace transform methods and it turns out to be convenient to normalise the profiles according to their tail behaviour which might also be seen as a motivation for this work.

\subsection{The equation for self-similar profiles}\label{Sec:equation:profiles}

If one plugs the ansatz~\eqref{eq:ansatz} into~\eqref{eq:Smol} one obtains by typical scaling arguments that, in order to obtain $s(t)\to \infty$ for $t\to \infty$, it should hold $\theta> 1+\lambda$ while $f$ solves the equation
\begin{equation}\label{eq:formal:self:sim}
 \theta f(x)+xf'(x)+\frac{1}{2}\int_{0}^{x}K(x-y,y)f(x-y)f(y)\dy-f(x)\int_{0}^{\infty}K(x,y)f(y)\dy=0.
\end{equation}
For the scaling function $s(t)$ one can assume without loss of generality that it holds
\begin{equation*}
 s(t)=\bigl((\theta-1-\lambda)t\bigr)^{1/(\theta-1-\lambda)}.
\end{equation*}
It is well-known that for $K(x,y)=x^{\alpha}y^{\beta}+x^{\beta}y^{\alpha}$ with $\alpha,\beta>0$ self-similar profiles have a singular behaviour close to zero, i.e.\@ $f(x)\sim x^{-1-\lambda}$ for $x\to 0$ (\cite{DoE88}). As a consequence the integrals in~\eqref{eq:formal:self:sim} are not well-defined and one has to exploit some cancellation between them. Thus, in order to obtain a regularised equation for self-similar profiles, we multiply~\eqref{eq:formal:self:sim} by $x$ and integrate to get after some elementary rearrangement that
\begin{equation}\label{eq:self:sim}
 x^{2}f(x)=(1-\rho)\int_{0}^{x}yf(y)\dy+\int_{0}^{x}\int_{x-y}^{\infty}yK(y,z)f(y)f(z)\dz\dy.
\end{equation}
Here we also introduced the parameter $\rho=\theta-1$ while due to our restrictions on $\theta$ we have $\rho>\lambda$. One fundamental property of~\eqref{eq:Smol} is that the total mass, i.e.\@ the first moment $\int_{(0,\infty)}x\phi(x,t)\dx$ is at least formally conserved over time provided that this quantity is finite for $t=0$. In this case one also expects the scaling ansatz~\eqref{eq:ansatz} to have constant mass which also fixes the parameter $\theta=2$ or equivalently $\rho=1$ and the first moment of $f$ is finite. Thus, the natural range for the parameter $\rho$ is $(\lambda,1]$. Since we will restrict in this work exclusively on the case of profiles with infinite mass we will only consider $\rho\in(\lambda,1)$. 

Note that in the case of finite mass, i.e.\@ $\rho=1$ the first term on the right-hand side of~\eqref{eq:self:sim} vanishes and the tail behaviour of the profile is completely determined by the non-linear integral operator on the right-hand side. Formal considerations in~\cite{DoE88} suggest that up to rescaling each self-similar profile $f$ with finite mass behaves as $f(x)=Ax^{-\lambda}\ee^{-x}$ for $x\to\infty$, where the constant $A$ depends on the rate kernel $K$. Except for a small class of kernels with homogeneity zero (see~\cite{NiV14}) this precise behaviour could not yet been established for non-solvable kernels but~\cite{EsM06,FoL06} provide exponential upper and lower bounds which are in accordance with the conjectured asymptotic behaviour. 

On the other hand, for profiles with infinite mass, i.e.\@ for $\rho\in(\lambda,1)$ the asymptotic behaviour for large cluster sizes is completely different. Precisely, for $\rho\in(\lambda,1)$ the second term on the right-hand side of~\eqref{eq:self:sim} is of lower order for $x\to\infty$. Thus, if we neglect this term, the large-mass behaviour of $f$ to first order is formally given by a simple linear ordinary differential equation, i.e.\@ one obtains $f(x)\sim (1-\rho)x^{-1-\rho}$ for $x\to \infty$.

The goal of this work is to prove that for kernels $K$ satisfying~\cref{eq:Ass:ker:1,eq:Ass:ker:2} each self-similar profile necessarily has this behaviour. Due to the different properties of~\eqref{eq:self:sim} for fat-tailed profiles our proof covers a much broader class of kernels compared to the case of finite mass considered in~\cite{NiV14}.

We also emphasise at this point that in the existence results~\cite{NiV13a,NTV16a} mentioned above, the asymptotic behaviour of the constructed profiles is also determined but the important difference to the present result is that there, this behaviour has already been encoded in weak form in the construction process (see also Section~\ref{Sec:profiles}) whereas here, we start with a general notion of self-similar profiles. This point is in particular important due to the lack of a uniqueness statement for most coagulation kernels.

Finally, we note that the asymptotic behaviour close to zero in the finite-mass-case could be established rigorously in~\cite{FoL06,CaM11} for specific power law kernels. 

\subsection{Main results}

Before we give the precise definition of self-similar profiles that we will use here, we first note that from the formal considerations in the previous section we expect that the scaling solutions $f$ satisfy $f(x)\sim x^{-1-\rho}$ as $x\to\infty$. Thus, due to the upper bound on $K$ in~\eqref{eq:Ass:ker:2}, in order to obtain a finite integral in the second term on the right-hand side of~\eqref{eq:self:sim}, we should at least require that $\rho>\beta$. Together with the condition $\rho>\lambda$ that one deduces by scaling arguments (see Section~\ref{Sec:equation:profiles}) one is thus led to the following notion of self-similar profiles.

\begin{definition}\label{Def:profile}
 For an integral kernel $K$ satisfying assumptions~\cref{eq:Ass:ker:1,eq:Ass:ker:2} and $\rho\in(\max\{\lambda,\beta\},1)$ we denote a function $f\in L^{1}_{\mathrm{loc}}\bigl((0,\infty)\bigr)$ a self-similar profile (to~\eqref{eq:Smol}) or equivalently a solution to~\eqref{eq:self:sim} if $f$ is non-negative, $f\not\equiv 0$ and $f$ satisfies~\eqref{eq:self:sim} almost everywhere. Additionally, we require that $f$ satisfies $xf(x)\in L^{1}_{\mathrm{loc}}\bigl([0,\infty)\bigr)$ and there exists $\gamma\geq \beta$ such that $\gamma>\lambda$ and it holds
 \begin{equation}\label{eq:min:moment:bd}
  \int_{1}^{\infty}x^{\gamma}f(x)\dx<C(\gamma,f).
 \end{equation}
\end{definition}

\begin{remark}\label{Rem:scale:invariance}
 Note that if $f$ is a self-similar profile according to Definition~\ref{Def:profile} then also the rescaled function $\tilde{f}(x)=a^{1+\lambda}f(ax)$ for all $a>0$ is a self-similar profile.
\end{remark}

\begin{remark}
 Note that we consider here rather general kernels with homogeneity $\lambda\in(-1,1)$. Thus, if $\beta<0$ our results are even true for certain negative values of $\rho$, i.e.\@ profiles $f$ which decay slower than $1/x$ at infinity.
\end{remark}

\begin{remark}
 Note that the results presented here are in principle true and can be shown in the same way if the self-similar profiles are only assumed to be non-negative Radon measures instead of $L^{1}$-functions. However, in order to simplify the presentation and to avoid certain technicalities we do not focus on this.
\end{remark}

Before we state the main results of this work let us give some comment on the integral bound~\eqref{eq:min:moment:bd}. For this, we first recall that due to the assumptions on the kernel $K$ we can write the homogeneity $\lambda=\alpha+\beta$ with $-1<\alpha\leq \beta<1$. From the upper bound on $K$ in~\eqref{eq:Ass:ker:2} it follows that, in order to obtain a convergent integral in~\eqref{eq:self:sim}, we necessarily need $\int_{1}^{\infty}x^{\beta}f(x)\dx<\infty$. Thus, if $\alpha<0$ we have $\lambda=\alpha+\beta<\beta$ and the requirement~\eqref{eq:min:moment:bd} arises naturally with $\gamma=\beta$. 

On the other hand, if $\alpha>0$ one can easily check that the explicit power law $\widehat{f}(x)=A_{\rho,\lambda}x^{-1-\lambda}$ satisfies~\eqref{eq:self:sim} for an appropriate choice of the constant $A_{\rho,\lambda}>0$, while this function obviously has not the expected decay behaviour at infinity. Thus, in order to rule out this special solution, we assume a higher integrability condition at infinity, while the minimal choice would in principle be $\gamma=\lambda$. However, it turns out that in order to get the iterative procedure that we use here started, we need slightly more integrability at infinity which is the reason why we require $\gamma>\lambda$.

Although in the case $\alpha=0$ we do not have the occurrence of an explicit power-law solution, this is a borderline case and we also have to require slightly more integrability at infinity than the one that one would naturally expect for the second integral in~\eqref{eq:self:sim} to be finite.

The only exception to this is the case $\alpha=\beta=0$ where we can use a different argument. Precisely, one can consider the Laplace transformed equation and derive a differential inequality which can be solved explicitly.

As a first step towards the precise asymptotic behaviour of self-similar solutions we establish a bound on the integral of $xf(x)$ over $(0,R)$ which has the expected scaling behaviour.

\begin{proposition}\label{Prop:tail:averaged}
 Assume that $K$ satisfies~\cref{eq:Ass:ker:1,eq:Ass:ker:2} and let $f$ be a self-similar profile according to Definition~\ref{Def:profile}. Then there exists a constant $C_{f}>0$ such that it holds
 \begin{equation*}
  \int_{0}^{R}yf(y)\dy\leq C_{f}R^{1-\rho}\qquad \text{for all }R>0.
 \end{equation*}
\end{proposition}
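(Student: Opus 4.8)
Write $M(R)\vcc=\int_0^R yf(y)\,\dy$, which is locally absolutely continuous on $[0,\infty)$ with $M'(x)=xf(x)$ for a.e.\ $x$, and denote by
\[
 \mathcal I(x)\vcc=\int_0^x\int_{x-y}^\infty yK(y,z)f(y)f(z)\,\dz\,\dy\geq 0
\]
the nonlinear term, so that \eqref{eq:self:sim} reads $x^2f(x)=(1-\rho)M(x)+\mathcal I(x)$, i.e.\ $xM'(x)=(1-\rho)M(x)+\mathcal I(x)$. Consequently $\bigl(x^{\rho-1}M(x)\bigr)'=x^{\rho-2}\mathcal I(x)\geq 0$, so $x\mapsto x^{\rho-1}M(x)$ is nondecreasing and, for $R\geq 1$,
\[
 R^{\rho-1}M(R)=M(1)+\int_1^R x^{\rho-2}\mathcal I(x)\,\dx .
\]
For $0<R\leq 1$ one has $M(R)\leq M(1)\leq M(1)R^{1-\rho}$ trivially since $\rho<1$, so the whole statement reduces to proving $\int_1^\infty x^{\rho-2}\mathcal I(x)\,\dx<\infty$.

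A cheap first estimate is $\mathcal I(x)\leq x^2f(x)$ (drop the nonnegative mass term), giving $\int_1^R x^{\rho-2}\mathcal I(x)\,\dx\leq\int_1^R x^{\rho}f(x)\,\dx$; if $\rho\leq\gamma$ this is bounded by $\int_1^\infty x^{\gamma}f(x)\,\dx<\infty$ and we are done. In general the bound \eqref{eq:min:moment:bd} still yields, for $R\geq 1$, the a priori estimate $M(R)\leq CR^{1-\gamma}$ (split $\int_0^R yf$ at $1$ and use $y^{1-\gamma}\leq R^{1-\gamma}$ on $[1,R]$; here one may and does assume $\gamma<1$, replacing $\gamma$ by a smaller admissible value if necessary, which only enlarges the integral in \eqref{eq:min:moment:bd}).

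For the remaining case $\gamma<\rho$ the plan is a bootstrap. Assume $M(R)\leq C_nR^{1-\gamma_n}$ for all $R\geq 1$ with some $\gamma_n\in[\gamma,\rho)$. I would estimate $\mathcal I(x)$ by: (i) bounding $K$ above via \eqref{eq:Ass:ker:2}; (ii) applying Fubini to the triple integral; (iii) decomposing the domain $\{0<y<x,\ z>x-y\}$ using the elementary fact that there $\max\{y,z\}\geq x/2$, and further according to whether the small variable is $\leq 1$ or not; and (iv) on each piece using the a priori bound $M(R)\leq C_nR^{1-\gamma_n}$ at infinity together with \eqref{eq:min:moment:bd}, integrating by parts to transfer the weights onto $M$ in the region where one variable is small, so that only local integrability of $yf$ near the origin — part of \cref{Def:profile} — is ever invoked, and so that the cutoff $z>x-y$ is used to avoid integrating $z^\alpha f(z)$ or $z^\beta f(z)$ down to $z=0$. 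This should produce a bound $\mathcal I(x)\leq C_n'\,x^{1+\lambda-2\gamma_n}$ for $x\geq 1$ (up to a harmless logarithmic factor when an exponent sits exactly at a critical value), whence $R^{\rho-1}M(R)\leq M(1)+C_n'\int_1^R x^{\rho-1+\lambda-2\gamma_n}\,\dx$. If $2\gamma_n>\rho+\lambda$ the integral converges and $M(R)\leq CR^{1-\rho}$; otherwise $M(R)\leq C_{n+1}R^{1-\gamma_{n+1}}$ with $\gamma_{n+1}=2\gamma_n-\lambda$. Since $\gamma_{n+1}-\lambda=2(\gamma_n-\lambda)$ and $\gamma-\lambda>0$ \emph{strictly} — which is precisely where the requirement $\gamma>\lambda$ in \cref{Def:profile} is needed — after finitely many iterations one reaches $\gamma_n>(\rho+\lambda)/2$ and the proof is complete; the logarithmic borderline cases are absorbed by one further iteration.

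The main obstacle is carrying out steps (iii)–(iv): the region where one of $y,z$ is small while the other is of order $x$ is delicate, because $f$ may be (mildly) singular near the origin and only $\int_0^1 yf\,\dy<\infty$ is available, so one must exploit both the cutoff $z>x-y$ and the extra factor of $y$ coming from the regularised form \eqref{eq:self:sim} (rather than \eqref{eq:formal:self:sim}), and then keep careful track of how the exponents $\alpha,\beta,\lambda,\rho,\gamma_n$ combine on each piece in order to verify that the gain $\gamma_n\mapsto\gamma_{n+1}>\gamma_n$ is uniform.
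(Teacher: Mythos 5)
Your overall strategy coincides with the paper's: test \eqref{eq:self:sim} against $x^{\rho-2}$ (you phrase this via the ODE for $R^{\rho-1}M(R)$, the paper via direct integration — same computation), use nonnegativity of the nonlinear term to reduce the claim to controlling $\int_1^R x^{\rho-2}\mathcal I(x)\,\dx$, apply Fubini, and bootstrap moment bounds starting from \eqref{eq:min:moment:bd}. Your iteration is geometric ($\gamma_{n+1}-\lambda=2(\gamma_n-\lambda)$) whereas the paper's is arithmetic (gaining $\gamma-\lambda$ per step because it keeps the Fubini-exponent fixed at the original $\gamma$), but both terminate in finitely many steps, so that difference is immaterial.

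There is, however, a real gap in the form you state the bootstrap step. You assert a \emph{pointwise} bound $\mathcal I(x)\leq C_n'x^{1+\lambda-2\gamma_n}$, but no pointwise bound on $\mathcal I$ can be extracted from the averaged information $M(R)\leq C_nR^{1-\gamma_n}$ and \eqref{eq:min:moment:bd} alone: in the region $y\to x$, $z=x-y\to 0$ (for $\alpha\leq 0\leq\beta$, say) the integrand behaves like $y^{1+\beta}f(y)(x-y)^{-\beta}$, and with $f$ only locally $L^1$ this convolution need not be pointwise finite, let alone obey a power-law bound. (Pointwise control of $\mathcal I$ is precisely what the paper defers to Proposition~\ref{Prop:pointwise:decay}, and it requires the Young-inequality machinery of Lemma~\ref{Lem:Young}.) What \emph{is} available at this stage is the \emph{integrated} bound on $\int_1^R x^{\rho-2}\mathcal I(x)\,\dx$ — which, to be fair, is all your argument actually needs — but then the device that makes the estimate go through is the elementary inequality $\int_y^{y+z}x^{\rho-2}\,\dx\leq C_\rho\,y^{\rho-1-a}z^{a}$ valid for every $a\in[0,1]$ (Lemma~\ref{Lem:inner:integral}): the free parameter $a$ is what allows one to shift weight between the $y$- and $z$-integrals region by region so that each factor is controlled by \eqref{eq:min:moment:bd}, Lemma~\ref{Lem:moment:origin}, or the running moment bound. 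Your sketch gestures at this (``transfer the weights'', ``exploit the cutoff $z>x-y$'') but never isolates this flexible inner-integral estimate, which is exactly where the exponent bookkeeping of steps (iii)--(iv) is resolved. So: right roadmap, same method as the paper, but the central technical lemma is missing, and the stated pointwise claim on $\mathcal I$ should be replaced by an integrated one.
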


\begin{proposition}\label{Prop:bounded:kernel}
 Assume that $K$ satisfies~\cref{eq:Ass:ker:1,eq:Ass:ker:2} with $\alpha=\beta=0$ and let $f$ be a self-similar profile according to Definition~\ref{Def:profile} but satisfying~\eqref{eq:min:moment:bd} with $\gamma=0$. Then there exists a constant $C_{f}>0$ such that it holds
 \begin{equation*}
  \int_{0}^{R}yf(y)\dy\leq C_{f}R^{1-\rho}\qquad \text{for all }R>0.
 \end{equation*}
\end{proposition}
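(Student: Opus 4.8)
The plan is to pass to the Laplace transform of $xf$. Set $\psi(q)=\int_{0}^{\infty}\ee^{-qx}xf(x)\dx$ and $h(q)=\int_{0}^{\infty}f(z)\bigl(1-\ee^{-qz}\bigr)\dz$. The two integrability properties available for a profile in the sense of \cref{Def:profile} with $\gamma=0$, namely $xf\in L^{1}_{\mathrm{loc}}([0,\infty))$ near the origin and $\int_{1}^{\infty}f(x)\dx<\infty$ near infinity, guarantee that $\psi(q),h(q)\in(0,\infty)$ for every $q>0$, that $\psi\in C^{\infty}((0,\infty))$ with $-\psi'(q)=\int_{0}^{\infty}\ee^{-qx}x^{2}f(x)\dx$, and that $h'=\psi$ with $h(0^{+})=0$ by dominated convergence (a $q$-independent majorant on $q\in(0,1]$ being $\mathbf{1}_{(0,1)}(z)\,zf(z)+\mathbf{1}_{(1,\infty)}(z)\,f(z)$), so that $h(q)=\int_{0}^{q}\psi(s)\ds$.

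First I would multiply~\eqref{eq:self:sim} by $\ee^{-qx}$ and integrate in $x$ over $(0,\infty)$. All integrands are non-negative, so Tonelli's theorem applies: the first term on the right transforms to $\tfrac{1-\rho}{q}\psi(q)$, and interchanging the order of integration in the non-linear term (for fixed $y,z>0$ the variable $x$ ranges over $(y,y+z)$) turns it into $\tfrac1q\int_{0}^{\infty}\!\int_{0}^{\infty}yK(y,z)f(y)f(z)\ee^{-qy}\bigl(1-\ee^{-qz}\bigr)\dz\dy$. This is where the hypothesis $\alpha=\beta=0$ enters: by~\eqref{eq:Ass:ker:2} it forces $0\le K\le 2C_{*}$ on all of $\R_{+}\times\R_{+}$, so bounding $K$ by $2C_{*}$ makes the double integral factorise as $\tfrac{2C_{*}}{q}\psi(q)h(q)$. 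Comparing with the left-hand side $-\psi'(q)$ produces, for every $q>0$, the differential inequality
\[
 -\psi'(q)\le\frac1q\bigl(1-\rho+2C_{*}h(q)\bigr)\psi(q),
\]
equivalently $(\log\psi)'(q)\ge-\tfrac{1-\rho}{q}-\tfrac{2C_{*}h(q)}{q}$, which I would then integrate.

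Integrating on $(q,q_{0})$ for any $q_{0}>0$ yields the explicit bound $\psi(q)\le\psi(q_{0})\bigl(q_{0}/q\bigr)^{1-\rho}\exp\bigl(2C_{*}\int_{q}^{q_{0}}h(s)s^{-1}\ds\bigr)$, so everything comes down to controlling the exponential factor as $q\to 0^{+}$. Since $h(0^{+})=0$, I would fix $\eps\in(0,\rho)$ and choose $q_{0}\in(0,1]$ with $2C_{*}h(q)\le\eps$ on $(0,q_{0}]$; then the explicit bound already gives $\psi(q)\le C(q_{0}/q)^{1-\rho+\eps}$, and as $1-\rho+\eps<1$ this is integrable near $0$, whence $h(q)=\int_{0}^{q}\psi(s)\ds\le C_{1}q^{\rho-\eps}$ on $(0,q_{0}]$. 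Feeding this improved rate back in, $\int_{0}^{q_{0}}h(s)s^{-1}\ds\le C_{1}\int_{0}^{q_{0}}s^{\rho-\eps-1}\ds<\infty$ because $\rho-\eps>0$, so the explicit bound now yields the sharp estimate $\psi(q)\le C_{2}q^{\rho-1}$ on $(0,q_{0}]$. To conclude, for $R\ge 1/q_{0}$ one estimates $\int_{0}^{R}yf(y)\dy\le\ee\int_{0}^{\infty}\ee^{-y/R}yf(y)\dy=\ee\,\psi(1/R)\le\ee C_{2}R^{1-\rho}$, while for $R<1/q_{0}$ one drops the non-negative non-linear term in~\eqref{eq:self:sim} to get $R^{2}f(R)\ge(1-\rho)\int_{0}^{R}yf(y)\dy$ for a.e.\ $R$, hence $R\mapsto R^{\rho-1}\int_{0}^{R}yf(y)\dy$ is non-decreasing and thus bounded on $(0,1/q_{0}]$ by its value at $1/q_{0}$; combining the two ranges gives the asserted bound with $C_{f}$ depending only on $C_{2}$, $q_{0}$ and $\int_{0}^{1/q_{0}}yf(y)\dy$.

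The step I expect to be the real obstacle is closing the estimate at the borderline exponent $\gamma=\lambda=0$: a single use of the differential inequality only gives $\psi(q)\lesssim q^{\rho-1-\eps}$, i.e.\ the integral bound with a defective exponent $1-\rho+\eps$, because the sole a priori information on $h$ near $0$ is $h(0^{+})=0$ with no rate attached. The two-pass bootstrap --- first extracting the polynomial rate $h(q)\lesssim q^{\rho-\eps}$ and only then re-integrating to remove the loss --- is the mechanism that overcomes this, and it relies on $\alpha=\beta=0$ precisely so that $h$ is the plain integral $\int_{0}^{\infty}f(z)(1-\ee^{-qz})\dz$ rather than one weighted by powers of $z$. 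The remaining work is technical bookkeeping: checking that, with only the integrability available for $\gamma=0$, the Fubini interchanges, the differentiation under the integral sign, and the limit $h(0^{+})=0$ are all legitimate.
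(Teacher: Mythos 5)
Your proof is correct and rests on the same fundamental idea as the paper's --- pass to the Laplace transform, exploit $K\le 2C_*$ to close a differential inequality in $q$, and solve it --- but the way you resolve the differential inequality is genuinely different. The paper works with $Q(q)=\int_0^\infty(1-\ee^{-qx})f(x)\dx$, integrates the Laplace-transformed equation once in $q$ to get $-qQ'\le -\rho Q+C_*Q^2$, and dispatches this Riccati inequality in one shot via the substitution $v=1/Q$ and the integrating factor $q^\rho$, yielding $\partial_q\bigl(q^\rho(1/Q-C_*/\rho)\bigr)\le 0$ and hence $Q(q)\le\widehat{C}q^\rho$. You instead stay at the level of $\psi=Q'$ and $h=Q$, write the inequality in log-derivative form, and argue in two passes: a first pass yielding a defective rate $\psi(q)\lesssim q^{\rho-1-\eps}$ from the mere qualitative fact $h(0^+)=0$, a deduced rate $h(q)\lesssim q^{\rho-\eps}$, and a second pass that recovers the sharp $\psi(q)\lesssim q^{\rho-1}$ because $\int_0 h(s)/s\,\ds$ is then finite. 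Your bootstrap is more robust in the sense that it does not rely on the precise Riccati structure, at the cost of being a bit longer; the paper's Riccati trick is more economical but needs $h=Q$ and $\psi=Q'$ to satisfy exactly that one nonlinear ODE. You also handle the range $R<1/q_0$ differently: instead of invoking Lemma~\ref{Lem:zero:averaged} together with a dyadic decomposition, you observe that $R\mapsto R^{\rho-1}\int_0^R yf(y)\dy$ is non-decreasing (an observation the paper deploys only later, in the proof of Proposition~\ref{Prop:asymptotics:averaged}), which makes the small-$R$ case immediate and is a small but tidy simplification. All the analytic checkpoints --- differentiability of $\psi$ and $h$, $h(0^+)=0$ by dominated convergence, the Tonelli interchange in the nonlinear term giving the $x$-range $(y,y+z)$, positivity of $\psi$ permitting the division --- are correctly handled.
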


With these estimates it will be rather straightforward to derive the asymptotic behaviour of $\int_{0}^{R}xf(x)\dx$ which is already a weak form of the desired statement.

\begin{proposition}\label{Prop:asymptotics:averaged}
 Assume the same conditions as in Proposition~\ref{Prop:tail:averaged} or~\ref{Prop:bounded:kernel} respectively. Then for each self-similar profile $f$ there exists a suitable rescaling $\tilde{f}(x)\vcc=a^{1+\lambda}f(ax)$ such that 
 \begin{equation*}
  \int_{0}^{R}y\tilde{f}(y)\dy\leq R^{1-\rho} \qquad \text{and}\qquad \int_{0}^{R}y\tilde{f}(y)\dy\sim R^{1-\rho}\qquad \text{for }R\longrightarrow \infty.
 \end{equation*}
 Note that $\tilde{f}$ is also a self-similar profile according to Remark~\ref{Rem:rescaling}.
\end{proposition}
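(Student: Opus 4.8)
The plan is to reformulate the averaged equation~\eqref{eq:self:sim} as a first-order relation for $M(R)\vcc=\int_{0}^{R}yf(y)\dy$ and to exploit monotonicity; the actual analytic difficulty has already been absorbed into \cref{Prop:tail:averaged,Prop:bounded:kernel}. Writing $N(x)$ for the non-linear double integral on the right-hand side of~\eqref{eq:self:sim}, the equation reads $x^{2}f(x)=(1-\rho)M(x)+N(x)$; since $xf(x)\in L^{1}_{\mathrm{loc}}([0,\infty))$ the function $M$ is locally absolutely continuous with $M'(x)=xf(x)$ a.e., so $x^{2}f(x)=xM'(x)$ and hence $xM'(x)-(1-\rho)M(x)=N(x)$ almost everywhere.

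The key step is to pass to $g(r)\vcc=r^{\rho-1}M(r)$, which is locally absolutely continuous on $(0,\infty)$ and satisfies $g'(r)=r^{\rho-2}\bigl(rM'(r)-(1-\rho)M(r)\bigr)=r^{\rho-2}N(r)\geq 0$ a.e., because $K,f\geq 0$ force $N\geq 0$. Thus $g$ is non-decreasing on $(0,\infty)$, and combined with the bound $g(r)\leq C_{f}$ from \cref{Prop:tail:averaged} (or \cref{Prop:bounded:kernel} in the case $\alpha=\beta=0$) this shows that $L\vcc=\lim_{r\to\infty}g(r)=\sup_{r>0}g(r)$ exists and is finite. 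Moreover $L>0$: since $f\geq 0$ and $f\not\equiv 0$ we have $M(r_{0})>0$ for some $r_{0}>0$, whence $L\geq g(r_{0})>0$. In particular $M(r)\leq Lr^{1-\rho}$ for all $r>0$ and $M(r)\sim Lr^{1-\rho}$ as $r\to\infty$.

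It remains to fix the normalisation constant. For $a>0$ the substitution $u=ay$ gives, for $\tilde{f}(x)\vcc=a^{1+\lambda}f(ax)$, the identity $\int_{0}^{R}y\tilde{f}(y)\dy=a^{\lambda-1}M(aR)=a^{\lambda-\rho}g(aR)\,R^{1-\rho}$. Choosing $a\vcc=L^{1/(\rho-\lambda)}$ — admissible since $\rho>\lambda$ and $L>0$, so that $a^{\lambda-\rho}=L^{-1}$ — one obtains $\int_{0}^{R}y\tilde{f}(y)\dy=L^{-1}g(aR)\,R^{1-\rho}$; the monotonicity bound $g\leq L$ then yields $\int_{0}^{R}y\tilde{f}(y)\dy\leq R^{1-\rho}$ for all $R>0$, while $g(aR)\to L$ yields $\int_{0}^{R}y\tilde{f}(y)\dy\sim R^{1-\rho}$ as $R\to\infty$. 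Finally, $\tilde{f}$ is again a self-similar profile by \cref{Rem:scale:invariance}.

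I expect no genuine obstacle here, since the tail bound does all the work; the only points needing a little care are the regularity used to differentiate (justifying that $M$, hence $g$, is locally absolutely continuous on $(0,\infty)$ from $xf(x)\in L^{1}_{\mathrm{loc}}([0,\infty))$) and the behaviour of $g$ near $r=0$, which is irrelevant for the limit at infinity. It is worth stressing that only the sign of $N$ enters — no size estimate on the non-linear term — which is why the conclusion holds under the hypotheses of both \cref{Prop:tail:averaged} and \cref{Prop:bounded:kernel}.
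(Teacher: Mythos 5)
Your proof is correct and follows essentially the same route as the paper: the monotonicity of $r\mapsto r^{\rho-1}M(r)$ from the non-negativity of the nonlinear term, combined with the bound from \cref{Prop:tail:averaged,Prop:bounded:kernel}, gives a finite positive limit $L$, and rescaling by $a=L^{1/(\rho-\lambda)}$ normalises it to $1$. You spell out a few points the paper leaves implicit (local absolute continuity of $M$, strict positivity of $L$, the explicit formula for $a$), but the argument is the same.
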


The main result of this work is the following statement which gives the precise asymptotic behaviour of self-similar profiles at infinity.

\begin{theorem}\label{Thm:asymptotics}
 Assume that $K$ satisfies~\cref{eq:Ass:ker:1,eq:Ass:ker:2} and let $f$ be a self-similar profile according to Definition~\ref{Def:profile} and subject to the normalisation given by Proposition~\ref{Prop:asymptotics:averaged}. Then $f$ is continuous on $(0,\infty)$ and satisfies $f(x)\sim (1-\rho)x^{-1-\rho}$ for $x\to \infty$.
\end{theorem}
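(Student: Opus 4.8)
The plan is to read the asymptotics off directly from the regularised equation~\eqref{eq:self:sim}. Write $F(x)\vcc=\int_0^x yf(y)\dy$ and $N(x)\vcc=\int_0^x\int_{x-y}^\infty yK(y,z)f(y)f(z)\dz\dy$, so that~\eqref{eq:self:sim} reads $x^2f(x)=(1-\rho)F(x)+N(x)$ for almost every $x>0$. By Proposition~\ref{Prop:asymptotics:averaged} and the chosen normalisation we have $F(x)\le x^{1-\rho}$ for all $x$ and $F(x)\sim x^{1-\rho}$ as $x\to\infty$. Since $F$ is absolutely continuous and $N$ is continuous on $(0,\infty)$ (the domain of integration varies continuously with $x$ while the integrand is dominated, on compact subsets of $(0,\infty)$, by a fixed integrable function, so dominated convergence applies), the function $x\mapsto x^{-2}\bigl((1-\rho)F(x)+N(x)\bigr)$ is continuous and coincides with $f$ almost everywhere, which gives the stated continuity. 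Moreover, since $N\ge0$ and $F(x)\le x^{1-\rho}$, we obtain the sandwich
\[
 (1-\rho)x^{-2}F(x)\le f(x)\le(1-\rho)x^{-1-\rho}+x^{-2}N(x),
\]
whose left-hand side is asymptotic to $(1-\rho)x^{-1-\rho}$. Hence the whole theorem reduces to the single estimate $N(x)=o\bigl(x^{1-\rho}\bigr)$ as $x\to\infty$.

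For this I would use the upper bound in~\eqref{eq:Ass:ker:2}, $N(x)\le C_*\iint_{\{0<y<x,\,y+z>x\}}y\bigl(y^\alpha z^\beta+y^\beta z^\alpha\bigr)f(y)f(z)\dz\dy$, and decompose the domain. On the region $y<1$ the inner integral runs over $z>x-y>x-1$ and is therefore a tail of $\int_1^\infty z^\gamma f$ (bounded by $(x-1)^{\beta-\gamma}M$ via $\gamma\ge\beta$, and vanishing as $x\to\infty$); so this part is $o(1)=o(x^{1-\rho})$, using $\int_0^1 y^{1+\alpha}f\dy<\infty$ (which is automatic when $\alpha\ge0$). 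On the regions $1<y$ with $z\ge\max\{y,x-y\}$, respectively $\max\{1,x-y\}<z<y$, homogeneity gives $K(y,z)\le 2C_*y^\alpha z^\beta$, respectively $K(y,z)\le 2C_*y^\beta z^\alpha$; estimating the inner $z$-integral by $\int_1^\infty z^\gamma f$ times a power of $\max\{1,x-y\}$ (again using $\gamma\ge\beta\ge\alpha$) and the outer $y$-integral by Proposition~\ref{Prop:tail:averaged}, in the form $\int_0^R yf\dy\le R^{1-\rho}$ and hence $\int_1^R y^p f\dy\lesssim R^{\max\{p-\rho,0\}}$, the resulting exponents are all bounded by $\max\{1+\lambda-\gamma-\rho,\ \beta-\gamma,\ 1-\rho+\lambda-\gamma\}$, each strictly less than $1-\rho$ because $\gamma>\lambda$, $\gamma\ge\beta$ and $\beta<1$.

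The delicate regions are those in which one variable is comparable to $x$ while the other is small, notably $y\in(x-1,x)$ together with $x-y<z<1$. A crude estimate there yields only $O\bigl(x^{1+\beta-\rho}\bigr)$, which is insufficient when $\beta>0$. To get past this I would run a bootstrap on the far-field decay of $f$: the identity $f(x)=x^{-2}\bigl((1-\rho)F(x)+N(x)\bigr)$ turns any bound $f(x)\le Cx^{-\mu}$ valid for $x\ge1$ into an improved one, because inserting it into $N$—keeping the inner lower limit $x-y$ coupled to $y$ and using Fubini to collapse the thin $y$-strip, which produces the harmless factor $\int_0^{x/2}z^{1+\alpha}f(z)\dz$—gives $N(x)\lesssim x^{1+\beta-\mu}+x^{1+\lambda-2\rho}$, hence $f(x)\lesssim x^{-\mu-(\rho-\lambda)}+x^{-1-\rho}$ for $x$ large; since $\rho-\lambda>0$, iterating drives $\mu$ up to $1+\rho$, and at that stage $N(x)\lesssim x^{\beta-\rho}+x^{1+\lambda-2\rho}=o\bigl(x^{1-\rho}\bigr)$, which closes the argument. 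The initial bound for the bootstrap comes from the moment hypothesis~\eqref{eq:min:moment:bd} together with the reduced equation, and it is precisely here, as well as in the easy regions above, that the strict inequality $\gamma>\lambda$ enters.

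I expect the main obstacle to be exactly this interaction inside $N$ between the singular behaviour of $f$ near the origin and its decay at infinity: one must not discard the lower limit $x-y$ of the inner integral, the Fubini rearrangement forces the integrability $\int_0^1 x^{1+\alpha}f\dx<\infty$ (which has to be established separately when $\alpha<0$, e.g.\ from~\eqref{eq:self:sim} restricted to small $x$, where the diagonal part of the nonlinear term is bounded below by homogeneity and the lower bound in~\eqref{eq:Ass:ker:2}), and the bootstrap must be arranged so that the gain $\rho-\lambda$ per step is uniform and the near-origin contributions remain bounded throughout. Once $N(x)=o\bigl(x^{1-\rho}\bigr)$ is available, the sandwich of the first paragraph immediately yields $f(x)\sim(1-\rho)x^{-1-\rho}$.
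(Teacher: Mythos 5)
Your overall reduction is exactly the paper's: rewrite~\eqref{eq:self:sim} as $x^{2}f(x)=(1-\rho)F(x)+N(x)$, use the normalisation $F(x)\sim x^{1-\rho}$, and conclude once $N(x)=o(x^{1-\rho})$ as $x\to\infty$; continuity comes from the same identity once $N$ is known to be continuous. You also correctly isolate the only delicate contribution to $N$, namely the strip $y\sim x$, $z=x-y$ small. However, the bootstrap you propose to handle that strip has a genuine gap at its starting point, and the gap is precisely where the paper's main technical work lives.

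Your bootstrap "$f(y)\le Cy^{-\mu}$ (for $y\ge1$) $\Rightarrow$ improved $N$ $\Rightarrow$ improved $\mu$'' is sound once it is running: you are right that, using Fubini to collapse the thin $y$-strip to length $z$, the hypothesis $f(y)\le Cy^{-\mu}$ yields a gain in the exponent of order $\rho-\lambda$ per step. But the hypotheses available at the outset — the moment bound~\eqref{eq:min:moment:bd} and the averaged estimate $\int_{0}^{R}yf\,\dy\le R^{1-\rho}$ — are purely integral, and neither they nor the identity $f=x^{-2}\bigl((1-\rho)F+N\bigr)$ produce an initial pointwise decay $f(x)\le Cx^{-\mu_{0}}$ at infinity. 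The reason is that $N(x)$ itself cannot be bounded pointwise from moment information alone on the delicate strip: without a pointwise input one can only say $\int_{x-z}^{x}yf(y)\dy\le\int_{0}^{x}yf(y)\dy\lesssim x^{1-\rho}$, which does \emph{not} scale with the strip width $z$, so the Fubini-collapse loses all its power and the crude estimate $N(x)=O(x^{1+\beta-\rho})$ is all that survives. In other words, the step "moment bound $\Rightarrow$ first pointwise bound" is not a triviality; it is the heart of the matter. The paper bridges it with Lemma~\ref{Lem:Young} (a variant of Young's convolution inequality on nested dyadic annuli), which it iterates through a finite ladder $L^{1}\to L^{r_{1}}\to\cdots\to L^{\infty}$, picking up a factor $R^{-1-\rho+1/r}$ at each step; this is Proposition~\ref{Prop:pointwise:decay}, and it is the ingredient your proposal does not supply. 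Only after that $L^{\infty}$ bound is available does a pointwise bootstrap of the kind you describe become meaningful (and even then, in the mixed case $\alpha\le0\le\beta$, the paper runs a second iteration because a single pass of Young's inequality can fall short). So the scheme is right in outline, but the missing convolution-type $L^{p}$ machinery is a real gap, not a detail, and cannot be replaced by the moment hypothesis and the equation alone.
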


\begin{remark}
 We note that Theorem~\ref{Thm:asymptotics} in the special case of homogeneity $\lambda=0$, i.e.\@ $\alpha=-\beta$ with $\beta\in[0,1)$ has already been treated in~\cite{Thr16} using the same kind of arguments. In this work we will however generalise the result to a much broader class of kernels which may have different homogeneity.
\end{remark}

\subsection{Outline of the article}

The remainder of this work is concerned with the proofs of these main results and as a first step, we will derive in Section~\ref{Sec:reg:zero} an estimate on the average of self-similar profiles which gives enough integrability in the region close to zero for what follows. 

In Section~\ref{Sec:averaged:tail:estimates} we will provide the proofs of~\cref{Prop:tail:averaged,Prop:bounded:kernel,Prop:asymptotics:averaged}. More precisely, in the case of Proposition~\ref{Prop:tail:averaged} the general strategy is to test the equation by the function $x^{\rho-2}$ and conclude by some iteration argument. 

Proposition~\ref{Prop:bounded:kernel} uses slightly weaker assumptions on the integrability of self-similar profiles at infinity, which is possible since for bounded kernels a completely different argument applies. More precisely, we take the Laplace transform of~\eqref{eq:self:sim} and use the boundedness of the kernel to derive a differential inequality for the Laplace transformed self-similar profile. Solving this inequality explicitly, gives an upper bound on the Laplace transform from which the desired estimate can then be derived easily.

The proof of Proposition~\ref{Prop:asymptotics:averaged} is then an easy consequence of~\cref{Prop:tail:averaged,Prop:bounded:kernel}.

Section~\ref{Sec:asymptotic:tail:behaviour} is finally devoted to the proof of Theorem~\ref{Thm:asymptotics}. The key for this will be Proposition~\ref{Prop:pointwise:decay} which states that for large values of $x$ each self-similar profile $f$ is at least bounded as $f(x)\leq Cx^{-1-\rho}$. Once this is established, one easily deduces that the non-linear term in~\eqref{eq:self:sim} is of lower order for $x\to\infty$ and the expected decay behaviour follows directly. However, the proof of Proposition~\ref{Prop:pointwise:decay} requires again some iteration argument and we have to consider additionally three different cases depending on the signs of the exponents $\alpha$ and $\beta$ although the general argument in all cases is similar.

\section{Regularity close to zero}\label{Sec:reg:zero}

In this section we will prove the following lemma which provides a uniform estimate for self-similar profiles in the region close to zero.

\begin{lemma}\label{Lem:zero:averaged}
  Assume that $K$ satisfies~\cref{eq:Ass:ker:1,eq:Ass:ker:2}. There exists a constant $C>0$ which only depends on $c_{*}$, $b$ and $B$ such that each self-similar profile satisfies
  \begin{equation*}
   \int_{R}^{2R}xf(x)\dx\leq CR^{1-\lambda} \qquad \text{for all }R>0.
  \end{equation*} 
\end{lemma}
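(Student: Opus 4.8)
The plan is to extract the desired bound directly from the self-similar equation~\eqref{eq:self:sim} by localising it to the dyadic annulus $[R,2R]$ and using only the lower bound on $K$ from~\eqref{eq:Ass:ker:2}. Observe first that for $x\in[R,2R]$ the right-hand side of~\eqref{eq:self:sim} contains the nonnegative double integral $\int_0^x\int_{x-y}^\infty yK(y,z)f(y)f(z)\dz\dy$, and since everything is nonnegative we may throw away the linear term $(1-\rho)\int_0^x yf(y)\dy$ if it has a good sign, or rather keep it and estimate it separately. The key point is to restrict the double integral to a region where $K$ is bounded below: if $y,z\in[bR',BR']$ for a suitable scale $R'$ comparable to $R$, then by homogeneity~\eqref{eq:Ass:ker:1} and the lower bound in~\eqref{eq:Ass:ker:2} we have $K(y,z)\geq c_* (R')^{\lambda}$.

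More precisely, I would proceed as follows. Fix $R>0$ and choose the scale so that the annulus $[bR,BR]$ (say) sits inside the domain of integration when $x\approx R$. For $x\in[(1+B/b\cdot\text{something})R, 2R\cdot\dots]$ — one picks the constants so that $x-y\le z$ forces nothing bad — restrict the inner $y$-integral to $y\in[bR,BR]$ and the $z$-integral to $z\in[bR,BR]$; one must check that for such $y$ the lower limit $x-y$ of the $z$-integral is $\le BR$, which holds provided $x\le (b+B)R$, so one works on an annulus like $[bR,(b+B)R/2]$ or similar and absorbs the mismatch by a covering/scaling argument. On that restricted region $K(y,z)\ge c_*(\min(y,z))^{\alpha}(\max\dots)\ge c' R^{\lambda}$ by homogeneity, so
\begin{equation*}
 x^2 f(x)\geq c' R^{\lambda} R \left(\int_{bR}^{BR} f(y)\dy\right)^2
\end{equation*}
roughly speaking (one factor $y\sim R$ comes out, and the two integrals of $f$ over the annulus appear). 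Integrating this in $x$ over the annulus and rearranging gives a bound of the form $\left(\int_{bR}^{BR} f\right)^2 \le C R^{1-\lambda}\int_{\text{annulus}} x^2 f(x)\dx / (R^{2}\cdot R)$, which is not yet closed; instead the cleaner route is to integrate~\eqref{eq:self:sim} in $x$ over $[R,2R]$ directly, bound the left-hand side below by $R^2\int_R^{2R}f$, and bound the right-hand side: the linear term contributes $\lesssim \int_0^{2R} yf\,\dy\cdot R$, which is circular, so one really does need the nonlinear lower bound to control things, giving a quadratic inequality $(\int f)^2 \gtrsim$ (something), hence an \emph{upper} bound once combined with an a priori finiteness.

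Let me restate the cleanest version: test~\eqref{eq:self:sim} with a cutoff, or simply note that from~\eqref{eq:self:sim} evaluated at a single point $x\in[2R,3R]$ (using that $f\in L^1_{loc}$ so the identity holds a.e.), and using $x-y\le R$ when $y\ge R$, we get $x^2 f(x) \ge \int_R^{2R}\int_{R}^{\infty} y K(y,z) f(y)f(z)\dz\dy \ge c_* R^{\lambda+1}\int_R^{2R}f(y)\dy\cdot\int_R^{2R}f(z)\dz$ after restricting $z$ to $[R,2R]$ and using the lower bound on $K$ there. Denoting $m_R := \int_R^{2R} f$, this reads $c_* R^{\lambda+1} m_R^2 \le x^2 f(x)$ for a.e. $x\in[2R,3R]$; integrating over that interval, $c_* R^{\lambda+1} m_R^2 \cdot R \le \int_{2R}^{3R} x^2 f(x)\dx \le C R^2 \cdot R\cdot m_{2R}'$ where $m'$ is the analogous quantity — still not closed. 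The honest fix is that $\int_{2R}^{3R} x^2 f \le 9R^2 \int_{2R}^{3R} f$ and one wants to bound $\int_R^{2R} xf \sim R\, m_R$; combining with $m_R^2 \lesssim R^{1-\lambda} m_{2R}$ (which is what the above gives) and iterating/summing the dyadic scales yields the claimed $R^{1-\lambda}$ growth.

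The main obstacle, and the step I would spend the most care on, is exactly closing this loop: the naive pointwise lower bound gives a quadratic-in-$m_R$ estimate controlled by $m_{2R}$ at the next scale, so one needs either a Gronwall-type summation over dyadic scales, or a clever choice of test function $x\mapsto x^{p}\mathbf 1_{[R,\cdot]}$ that makes the nonlinear term's contribution telescope. I expect the correct argument tests~\eqref{eq:self:sim} against $x^{-2}\mathbf 1_{(0,R)}(x)$ (or integrates the differential form~\eqref{eq:formal:self:sim} against a cutoff), producing on the left $\int_0^R x^{-1} \cdot x^{-1}\cdot x^2 f = \int_0^R f$... — in any case, the structural input is always: nonnegativity lets us keep only a convenient piece of the double integral, and the lower bound on $K$ on a fixed box $[b,B]^2$ together with homogeneity turns that piece into $c_* R^{\lambda}$ times a product of annular masses of $f$, which, being bounded by the a priori local integrability of $xf$ from Definition~\ref{Def:profile}, forces $m_R \lesssim R^{-\lambda}$ and hence $\int_R^{2R} xf \lesssim R^{1-\lambda}$ with a constant depending only on $c_*,b,B$ as claimed.
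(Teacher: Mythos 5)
Your proposal correctly identifies the structural ingredients — use nonnegativity to drop the linear term and restrict the double integral to a box of scale $R$ where the lower bound in~\eqref{eq:Ass:ker:2} and homogeneity give $K\gtrsim c_*R^{\lambda}$, producing an inequality quadratic in annular masses of $f$ — but you never close it, and you say so yourself several times. The recursion you arrive at, $m_R^2\lesssim R^{-\lambda}m_{2R}$ with $m_R=\int_R^{2R}f$, runs the wrong way: it controls the mass at scale $R$ only in terms of the mass at the \emph{larger} scale $2R$, so iterating over dyadic scales needs a base case at infinity, which is exactly the unknown. The closing appeal to the a~priori local integrability of $xf$ from Definition~\ref{Def:profile} cannot rescue this either: local integrability gives a finite but $R$-dependent bound on each annular mass, whereas the lemma requires the precise scale-invariant power $R^{1-\lambda}$.

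The paper's proof closes the quadratic inequality at a single scale by a self-cancellation that your sketch is circling around but never lands on. Divide~\eqref{eq:self:sim} by $x$ (rather than evaluating pointwise or integrating against a constant) and integrate over $[bR,BR]$: the left-hand side is then $\int_{bR}^{BR}xf(x)\dx$. After two applications of Fubini and shrinking the $(y,z)$-domain, the right-hand side is bounded below by $cR^{\lambda-1}\int_{bR}^{(b+B)R/2}yf\,\dy\cdot\int_{(B-b)R}^{BR}zf\,\dz$, where the lower bound on $K$ on the box together with homogeneity supplies the $R^{\lambda}$. The crucial observation is that, assuming WLOG $B\le 2b$, the second factor $\int_{(B-b)R}^{BR}zf\,\dz$ dominates the quantity $\int_{bR}^{BR}xf\,\dx$ sitting on the left, so that factor cancels (or is zero, in which case the claim is trivial), leaving the linear bound $\int_{bR}^{(b+B)R/2}yf\,\dy\le CR^{1-\lambda}$ at a single scale. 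The dyadic annulus $[R,2R]$ then follows from a finite covering because $(b+B)/(2b)>1$. No iteration over scales, no Gronwall argument, and no telescoping test function is needed; the whole point is to choose the test function $1/x$ on $[bR,BR]$ so that the left-hand side matches one of the two factors on the right.
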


\begin{remark}
 This statement gives that each self-similar profile behaves at least in an averaged sense not worse than $x^{-1-\lambda}$ close to zero. More precisely this is also true for large values of $x$ while it will turn out that we can obtain better decay there.
\end{remark}

The same kind of result has already been used in the case of finite mass in~\cite{NiV11}. However, the same proof also applies in the present situation where we consider fat-tailed profiles since it only relies on the structure of the non-linear term in~\eqref{eq:self:sim}. However, since we use slightly different assumptions on the kernel $K$ here and for completeness we briefly recall the argument of~\cite{NiV11}.

\begin{proof}[Proof of Lemma~\ref{Lem:zero:averaged}]
 We may assume without loss of generality that it holds $D\leq 2 d$ in the assumption~\eqref{eq:Ass:ker:2} since for $D>2d$ the lower bound in~\eqref{eq:Ass:ker:2} still holds with $D$ replaced by $2d$.
 
 We then divide~\eqref{eq:self:sim} by $x$, integrate over $[bR,BR]$ for any $R>0$ and use the non-negativity of $f$ to get
 \begin{multline*}
   \int_{bR}^{BR}xf(x)\dx\\*
   =(1-\rho)\int_{bR}^{BR}\frac{1}{x}\int_{0}^{x}yf(y)\dy\dx+\int_{bR}^{BR}\frac{1}{x}\int_{0}^{x}\int_{x-y}^{\infty}yK(y,z)f(y)f(z)\dz\dy\dx\\*
   \geq \frac{1}{BR}\int_{bR}^{BR}\int_{0}^{x}\int_{x-y}^{\infty}yK(y,z)f(y)f(z)\dz\dy\dx.
  \end{multline*}
With Fubini's Theorem and the non-negativity of the integrand, we can rewrite and estimate the right-hand side further as
 \begin{multline*}
   \int_{bR}^{BR}\int_{0}^{x}\int_{x-y}^{\infty}(\cdots)\dz\dy\dx\\*
    =\int_{0}^{bR}\int_{bR}^{BR}\int_{x-y}^{\infty}(\cdots)\dz\dx\dy+\int_{bR}^{BR}\int_{y}^{BR}\int_{x-y}^{\infty}(\cdots)\dz\dx\dy\\*
   \geq \int_{bR}^{BR}\int_{y}^{BR}\int_{x-y}^{\infty}(\cdots)\dz\dx\dy.
  \end{multline*}
Using Fubini's Theorem again to rewrite the right-hand side we can estimate once more to get
\begin{multline*}
   \int_{bR}^{BR}\int_{0}^{x}\int_{x-y}^{\infty}(\cdots)\dz\dy\dx\\*
   \geq \int_{bR}^{BR}\int_{0}^{BR-y}\int_{y}^{y+z}(\cdots)\dx\dz\dy+\int_{bR}^{BR}\int_{BR-y}^{\infty}\int_{y}^{BR}(\cdots)\dx\dz\dy\\*
  \geq \int_{bR}^{BR}\int_{BR-y}^{\infty}\int_{y}^{BR}(\cdots)\dx\dz\dy.
\end{multline*}
Reducing the domain of integration in the variables $y$ and $z$ and using the homogeneity of the kernel $K$ we thus find
\begin{equation*}
 \begin{split}
  \int_{bR}^{BR}xf(x)\dx&\geq \frac{1}{BR}\int_{bR}^{BR}\int_{BR-y}^{\infty}\int_{y}^{BR}\dx yK(y,z)f(y)f(z)\dz\dy\\
  &\geq \frac{1}{BR}\int_{bR}^{\frac{b+B}{2}R}\int_{(B-b)R}^{BR}(BR-y)yK\Bigl(R\frac{y}{R},R\frac{z}{R}\Bigr)f(y)\frac{1}{z}zf(z)\dz\dy\\
  &\geq\frac{c_{*}(B-b)}{B^2}R^{\lambda-1}\int_{bR}^{\frac{b+B}{2}R}yf(y)\dy\int_{(B-b)R}^{BR}zf(z)\dz.
 \end{split}
\end{equation*}
The assumption $B\leq 2b$ yields $\int_{(B-b)R}^{BR}zf(z)\dz\geq \int_{bR}^{BR}zf(z)\dz$ such that it follows
\begin{equation*}
 \int_{bR}^{\frac{b+B}{2}R}yf(y)\dy\leq \frac{B^2}{c_{*}(B-b)}R^{1-\lambda}.
\end{equation*}
If we replace $R$ by $R/b$ this reads as
\begin{equation}\label{eq:zero:averaged:1}
 \int_{R}^{\frac{b+B}{2b}R}yf(y)\dy\leq \frac{B^2}{c_{*}(B-b)b^{1-\lambda}}R^{1-\lambda}.
\end{equation}
To conclude the proof, we note that $(b+B)/(2b)>1$. Therefore, we can fix a constant $N\in\N$ such that it holds $\bigl(\frac{b+B}{2b}\bigr)^{N+1}\geq 2$. Splitting the integral and using~\eqref{eq:zero:averaged:1} we then obtain
\begin{equation*}
 \int_{R}^{2R}xf(x)\dx\leq \sum_{n=0}^{N}\int_{\left(\frac{b+B}{2b}\right)^{n}R}^{\left(\frac{b+B}{2b}\right)^{n+1}R}xf(x)\dx\leq \frac{B^2}{c_{*}(B-b)b^{1-\lambda}}\sum_{n=0}^{N}\left(\frac{b+B}{2b}\right)^{n} R^{1-\lambda},
\end{equation*}
which yields the claim.
\end{proof}

From Lemma~\ref{Lem:zero:averaged} we directly infer the following result which provides estimates on the integral of $f$ in the region close to zero.

\begin{lemma}\label{Lem:moment:origin}
 Let $\gamma>\lambda$. There exists a constant $C_{\gamma}>0$ such that it holds
 \begin{equation*}
  \int_{0}^{1}x^{\gamma}f(x)\dx<C
 \end{equation*}
 for each self-similar profile $f$.
\end{lemma}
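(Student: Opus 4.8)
The statement to prove is Lemma~\ref{Lem:moment:origin}: for $\gamma>\lambda$ there is $C_\gamma>0$ with $\int_0^1 x^\gamma f(x)\dx < C_\gamma$ for every self-similar profile. The phrase ``we directly infer'' signals that this should follow from Lemma~\ref{Lem:zero:averaged} by a dyadic decomposition of $(0,1)$. So the plan is: write $(0,1] = \bigcup_{n\ge 0}(2^{-n-1},2^{-n}]$, bound each piece using Lemma~\ref{Lem:zero:averaged}, and sum a geometric series that converges precisely because $\gamma>\lambda$.

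First I would observe that on the dyadic annulus $R=2^{-n-1}$, $(R,2R]$, one has $x^\gamma \le (2R)^\gamma = 2^\gamma R^\gamma$, hence $x^\gamma f(x) \le 2^\gamma R^{\gamma-1}\cdot x f(x)$ on that annulus. Integrating and invoking Lemma~\ref{Lem:zero:averaged} gives
\begin{equation*}
 \int_{R}^{2R} x^\gamma f(x)\dx \le 2^\gamma R^{\gamma-1}\int_{R}^{2R} x f(x)\dx \le 2^\gamma C R^{\gamma-1}R^{1-\lambda} = 2^\gamma C R^{\gamma-\lambda},
\end{equation*}
with $C$ the constant from Lemma~\ref{Lem:zero:averaged}, depending only on $c_*$, $b$, $B$.

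Next I would sum over the dyadic scales $R = 2^{-n-1}$, $n=0,1,2,\dots$:
\begin{equation*}
 \int_0^1 x^\gamma f(x)\dx = \sum_{n=0}^\infty \int_{2^{-n-1}}^{2^{-n}} x^\gamma f(x)\dx \le 2^\gamma C \sum_{n=0}^\infty \bigl(2^{-n-1}\bigr)^{\gamma-\lambda} = 2^\gamma C\, 2^{-(\gamma-\lambda)}\sum_{n=0}^\infty 2^{-n(\gamma-\lambda)}.
\end{equation*}
Since $\gamma>\lambda$ we have $\gamma-\lambda>0$, so $2^{-(\gamma-\lambda)}<1$ and the geometric series converges to $\bigl(1-2^{-(\gamma-\lambda)}\bigr)^{-1}$, yielding a finite bound $C_\gamma = 2^\gamma C\, 2^{-(\gamma-\lambda)}\bigl(1-2^{-(\gamma-\lambda)}\bigr)^{-1}$ that is independent of $f$. (One should note that the integral is genuinely over $(0,1]$ and the sets $(2^{-n-1},2^{-n}]$ partition it, so no contribution is lost at the endpoints; the value at the single point $x=1$ is irrelevant for the Lebesgue integral, and in any case $\int_0^1$ versus $\int_0^{1}$ with a closed or half-open interval agree.)

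There is essentially no obstacle here: the only thing to be careful about is that the constant $C$ from Lemma~\ref{Lem:zero:averaged} is uniform over all self-similar profiles (it is, by that lemma's statement), and that the exponent bookkeeping is done correctly so that convergence of the series hinges exactly on $\gamma>\lambda$ — which is why this hypothesis appears. The factor $R^{\gamma-1}$ from the crude bound $x^\gamma\le 2^\gamma R^\gamma$ combined with the $R^{1-\lambda}$ from Lemma~\ref{Lem:zero:averaged} produces $R^{\gamma-\lambda}$, and summing $R=2^{-n-1}\to 0$ requires the exponent to be positive. This matches the role of the condition $\gamma>\lambda$ emphasized in the surrounding discussion (it is needed ``to get the iterative procedure started'').
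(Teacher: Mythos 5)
Your proof matches the paper's intended argument exactly: Lemma~\ref{Lem:zero:averaged} on dyadic annuli $(2^{-n-1},2^{-n}]$, then a geometric series convergent precisely because $\gamma>\lambda$. One tiny bookkeeping caveat: since $\lambda$ may be negative, $\gamma$ need not be nonnegative, and for $\gamma<0$ the pointwise bound on $(R,2R]$ is $x^\gamma\le R^\gamma$ rather than $x^\gamma\le 2^\gamma R^\gamma$; replacing your prefactor $2^\gamma$ by $\max\{1,2^\gamma\}$ (or $2^{\abs{\gamma}}$) fixes this without affecting the convergence of the series.
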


\begin{proof}
 The proof follows immediately from Lemma~\ref{Lem:zero:averaged} together with a dyadic decomposition similarly as in the proof of Lemma~\ref{Lem:moment:estimates}.
\end{proof}

\section{Averaged tail estimates}\label{Sec:averaged:tail:estimates}

\subsection{Proof of Proposition~\ref{Prop:bounded:kernel}}

We start with the proof of Proposition~\ref{Prop:bounded:kernel} which relies on the Laplace transform and is much easier than the corresponding proof of Proposition~\ref{Prop:tail:averaged}. The same argument is already contained in~\cite{Thr16} while similar ideas have also been used in~\cite{NiV14a}.

\begin{proof}[Proof of Proposition~\ref{Prop:bounded:kernel}]
 Since we have $\alpha=\beta=0$ it follows from~\eqref{eq:Ass:ker:2} that $K(x,y)\leq 2C_{*}$ for all $x,y>0$. For $q\geq 0$ we then denote by
 \begin{equation*}
  Q(q)\vcc=\int_{0}^{\infty}(1-\ee^{-qx})f(x)\dx
 \end{equation*}
 the desingularised Laplace transform of the self-similar profile $f$. Note that $Q$ is well-defined due to the condition~\eqref{eq:min:moment:bd} (with $\gamma=0$) and Lemma~\ref{Lem:moment:origin}. Moreover, $Q$ as a function of $q$ is infinitely often differentiable on $(0,\infty)$ and we have in particular
 \begin{equation}\label{eq:rel:des:Lap:trans}
  Q'(q)=\int_{0}^{\infty}xf(x)\ee^{-qx}\dx\qquad \text{and}\qquad Q''(q)=-\int_{0}^{\infty}x^2f(x)\ee^{-qx}\dx.
 \end{equation}
  We next multiply equation~\eqref{eq:self:sim} by $\ee^{-qx}$ and integrate over $(0,\infty)$ to obtain
  \begin{multline*}
   \int_{0}^{\infty}x^2f(x)\ee^{-qx}\dx=(1-\rho)\int_{0}^{\infty}\ee^{-qx}\int_{0}^{x}yf(y)\dy\dx\\
   +\int_{0}^{\infty}\ee^{-qx}\int_{0}^{x}\int_{x-y}^{\infty}yK(y,z)f(y)f(z)\dz\dy\dx.
  \end{multline*}
 Applying Fubini's Theorem on the right-hand side and using the relations given in~\eqref{eq:rel:des:Lap:trans} we obtain after some elementary rearrangements that $Q$ satisfies the equation
 \begin{multline}\label{eq:des:self:sim}
  -\del_{q}\bigl(qQ''(q)\bigr)=-\rho Q'(q)\\*
  +\del_{q}\biggl(\frac{1}{2}\int_{0}^{\infty}\int_{0}^{\infty}K(y,z)f(y)f(z)(1-\ee^{-qy})(1-\ee^{-qz})\dz\dy\biggr).
 \end{multline}
We first note that the second term on the right-hand side is well-defined due to~\eqref{eq:min:moment:bd} and Lemma~\ref{Lem:moment:origin} and moreover it follows by monotone convergence that
\begin{equation*}
 \frac{1}{2}\int_{0}^{\infty}\int_{0}^{\infty}K(y,z)f(y)f(z)(1-\ee^{-qy})(1-\ee^{-qz})\dz\dy\longrightarrow 0\quad \text{for } q\longrightarrow 0.
\end{equation*}
Similarly, we have
\begin{equation*}
 Q(q)\longrightarrow 0\quad \text{and}\quad qQ'(q)\longrightarrow 0\qquad \text{for }q\longrightarrow 0.
\end{equation*}
As a consequence, we can integrate equation~\eqref{eq:des:self:sim} over $(0,q)$ to get
\begin{equation}\label{eq:Q:1}
 -qQ'(q)=-\rho Q(q)+\frac{1}{2}\int_{0}^{\infty}\int_{0}^{\infty}K(y,z)f(y)f(z)(1-\ee^{-qy})(1-\ee^{-qz})\dz\dy.
\end{equation}
From the non-negativity of $f$ and the bound $K(y,z)\leq 2C_{*}$ we can thus derive the inequality
\begin{equation*}
 -qQ'(q)\leq -\rho Q(q)+C_{*}Q^2(q).
\end{equation*}
Since $f$ is non-negative and non-trivial it holds that $Q(q)>0$ for all $q>0$ and thus we can rearrange the inequality to get
\begin{equation*}
 \del_{q}\biggl(q^{\rho}\Bigl(\bigl(Q(q)\bigr)^{-1}-\frac{C_{*}}{\rho}\Bigr)\biggr)\leq 0.
\end{equation*}
Due to the fact that $Q(q)\to 0$ for $q\to 0$ we can fix $\widehat{q}>0$ such that $(1/Q(q)-C_{*}/\rho)>0$ for all $q\leq \widehat{q}$. If we then fix the constant $\widehat{C}\vcc=\widehat{q}^{\;\rho}(1/Q(\,\widehat{q}\,)+C_{*}/\rho)>0$ and integrate over $[\,q,\widehat{q}\,]$ we obtain after some rearrangement that
\begin{equation*}
 Q(q)\leq \frac{q^{\rho}}{\frac{C_{*}}{\rho}q^{\rho}+\widehat{C}^{-1}}\leq \widehat{C}q^{\rho} \qquad \text{for all }q\leq \widehat{q}.
\end{equation*}
If we use this estimate together with~\eqref{eq:Q:1} and the non-negativity of $f$ we find
\begin{equation*}
 Q'(q)\leq \frac{\rho}{q}Q(q)\leq \widehat{C}\rho q^{\rho-1} \quad \text{for all }q\leq \widehat{q}.
\end{equation*}
Now, if we recall also~\eqref{eq:rel:des:Lap:trans} we find for all $R\geq \widehat{q}^{\,-1}$ that it holds
\begin{equation*}
 \int_{R}^{2R}xf(x)\dx=\int_{R}^{2R}xf(x)\ee^{-\frac{x}{R}}\ee^{\frac{x}{R}}\dx\leq \ee^{2}\int_{0}^{\infty}xf(x)\ee^{-\frac{x}{R}}\dx\leq \widehat{C}\rho\ee^{2}R^{1-\rho}.
\end{equation*}
Together with Lemma~\ref{Lem:zero:averaged} the claim then directly follows by means of a dyadic decomposition similarly as in the proof of Lemma~\ref{Lem:moment:estimates}.
\end{proof}

The following elementary lemma will be used frequently in the proof of Proposition~\ref{Prop:tail:averaged}.

\begin{lemma}\label{Lem:inner:integral}
 For each $\rho\in(\lambda,1)$ there exists a constant $C_{\rho}$ such that it holds
 \begin{equation*}
  \int_{y}^{y+z}x^{\rho-2}\dx\leq C_{\rho}y^{\rho-1-a}z^{a}
 \end{equation*}
 for all $y,z>0$ and all $a\in[0,1]$.
\end{lemma}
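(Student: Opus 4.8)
The plan is to prove the two endpoint cases $a=0$ and $a=1$ by elementary means and then interpolate. Since $\rho<1$ we have $\rho-2<-1$, so the integrand is decreasing and explicitly integrable; evaluating the antiderivative $x^{\rho-1}/(\rho-1)$ gives
\begin{equation*}
 \int_{y}^{y+z}x^{\rho-2}\dx=\frac{1}{1-\rho}\bigl(y^{\rho-1}-(y+z)^{\rho-1}\bigr)\leq \frac{1}{1-\rho}\,y^{\rho-1},
\end{equation*}
which is precisely the claimed bound for $a=0$ with constant $1/(1-\rho)$. On the other hand, estimating the integrand on $[y,y+z]$ by its maximal value $y^{\rho-2}$ (again using $\rho-2<0$) yields
\begin{equation*}
 \int_{y}^{y+z}x^{\rho-2}\dx\leq y^{\rho-2}z,
\end{equation*}
which is the claimed bound for $a=1$ with constant $1$.

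For general $a\in[0,1]$ I would interpolate between these two estimates. Writing $I\vcc=\int_{y}^{y+z}x^{\rho-2}\dx$ and splitting $I=I^{a}I^{1-a}$, one applies the $a=1$ bound to the first factor and the $a=0$ bound to the second to obtain
\begin{equation*}
 I\leq \bigl(y^{\rho-2}z\bigr)^{a}\Bigl(\tfrac{1}{1-\rho}y^{\rho-1}\Bigr)^{1-a}=\Bigl(\tfrac{1}{1-\rho}\Bigr)^{1-a}y^{a(\rho-2)+(1-a)(\rho-1)}z^{a}.
\end{equation*}
A direct computation shows $a(\rho-2)+(1-a)(\rho-1)=\rho-1-a$, so the powers of $y$ and $z$ are exactly as required. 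Since $t^{s}\leq\max\{1,t\}$ for every $t>0$ and $s\in[0,1]$, the prefactor is bounded by $\max\{1,1/(1-\rho)\}$ uniformly in $a$, and the lemma follows with $C_{\rho}\vcc=\max\{1,1/(1-\rho)\}$.

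The argument is entirely routine; the closest thing to an obstacle is merely checking that the exponent arithmetic in the interpolation step collapses to $\rho-1-a$ and that the constant can be taken independent of $a$, both of which are immediate once the two endpoint estimates are available. Note that $C_{\rho}\to\infty$ as $\rho\uparrow 1$, which is harmless since $\rho$ is a fixed parameter throughout.
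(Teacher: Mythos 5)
Your proof is correct, but the mechanism differs from the paper's. The paper obtains the same two basic estimates $\int_{y}^{y+z}x^{\rho-2}\dx\leq \frac{1}{1-\rho}y^{\rho-1}$ and $\int_{y}^{y+z}x^{\rho-2}\dx\leq y^{\rho-2}z$, but then splits into the cases $y\leq z$ and $z\leq y$: in the first case it applies the $a=0$ bound and uses $y^{a}\leq z^{a}$, and in the second it applies the $a=1$ bound and uses $z^{1-a}\leq y^{1-a}$. You instead keep $y,z$ arbitrary and interpolate multiplicatively, writing $I=I^{a}I^{1-a}$ and applying one endpoint bound to each factor — a log-convexity argument rather than a case distinction. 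Both routes are elementary and yield the same constant $C_{\rho}=\max\{1,1/(1-\rho)\}$; your version avoids the case split entirely and makes it transparent that the estimate is exactly the geometric interpolant of the two endpoint bounds, while the paper's case split is perhaps slightly more direct to verify line by line.
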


\begin{proof}
 For $y\leq z$ we get the estimate
 \begin{equation*}
  \begin{split}
   \int_{y}^{y+z}x^{\rho-2}\dx=\frac{1}{1-\rho}\bigl(y^{\rho-1}-(y+z)^{\rho-1}\bigr)\leq \frac{1}{1-\rho}y^{\rho-1-a}y^{a}\leq \frac{1}{1-\rho}y^{\rho-1-a}z^a.
  \end{split}
 \end{equation*}
 On the other hand, if $z\leq y$, we find
 \begin{equation*}
  \int_{y}^{y+z}x^{\rho-2}\dx\leq y^{\rho-2}z=y^{\rho-2}z^{1-a}z^{a}\leq y^{\rho-a-1}z^{a}.
 \end{equation*}
 Thus, the claim follows with $C_{\rho}\vcc=\max\{1,1/(1-\rho)\}$.
\end{proof}

\subsection{Proof of Proposition~\ref{Prop:tail:averaged}}

We will now give the proof of Proposition~\ref{Prop:tail:averaged} which relies essentially on an iterated moment estimate.

\begin{proof}[Proof of Proposition~\ref{Prop:tail:averaged}]
 Note first that in view of Lemma~\ref{Lem:zero:averaged} it suffices to show the claim for $R\geq 1$. We then multiply equation~\eqref{eq:self:sim} by $x^{\rho-2}$ and integrate over $(0,R)$ to get
 \begin{multline*}
  \int_{0}^{R}x^{\rho}f(x)\dx+(\rho-1)\int_{0}^{R}x^{\rho-2}\int_{0}^{x}yf(y)\dy\dx\\*
  =\int_{0}^{R}x^{\rho-2}\int_{0}^{x}\int_{x-y}^{\infty}yK(y,z)f(y)f(z)\dz\dy\dx.
 \end{multline*}
We next apply Fubini's Theorem which is possible as the following estimates will show. This then yields
\begin{multline*}
 \int_{0}^{R}x^{\rho}f(x)\dx+(\rho-1)\int_{0}^{R}yf(y)\int_{y}^{R}x^{\rho-2}\dx\dy\\*
  =\int_{0}^{R}\int_{y}^{R}\int_{x-y}^{\infty}x^{\rho-2}yK(y,z)f(y)f(z)\dz\dx\dy.
\end{multline*}
Due to the non-negativity we can enlarge the domain of integration on the right-hand side which together with the evaluation of the integral $\int_{y}^{R}x^{\rho-2}\dx$ gives the estimate
\begin{equation*}
  R^{\rho-1}\int_{0}^{R}yf(y)\dy\leq \int_{0}^{R}\int_{y}^{\infty}\int_{x-y}^{\infty}x^{\rho-2}yK(y,z)f(y)f(z)\dz\dx\dy.
\end{equation*}
After another application of Fubini's Theorem we finally arrive at
\begin{equation}\label{eq:pos:exp:1}
 R^{\rho-1}\int_{0}^{R}yf(y)\dy\leq \int_{0}^{R}\int_{0}^{\infty}yK(y,z)f(y)f(z)\int_{y}^{y+z}x^{\rho-2}\dx\dz\dy.
\end{equation}
 In order to proceed we have to derive an estimate on the integral on the right-hand side which requires to consider the regions close and far from zero separately. We thus split the integral $\int_{0}^{R}\int_{0}^{\infty}(\cdots)\dz\dy$ as
\begin{multline}\label{eq:pos:exp:2}
  \int_{0}^{R}\int_{0}^{\infty}(\cdots)\dz\dy=\int_{0}^{1}\int_{0}^{1}(\cdots)\dz\dy+\int_{1}^{R}\int_{0}^{1}(\cdots)\dz\dy\\*
  +\int_{0}^{1}\int_{1}^{\infty}(\cdots)\dz\dy+\int_{1}^{R}\int_{1}^{\infty}(\cdots)\dz\dy.
\end{multline}
We will now estimate the four integrals on the right-hand side separately and we start with the first one. For this, we use the upper bound on $K$ which is given by~\eqref{eq:Ass:ker:2} together with Lemma~\ref{Lem:inner:integral} one time with $a_{1}=\max\{(\alpha+\rho-\beta)/2,0\}$ and one time with $a_{2}=\max\{(\beta+\rho-\alpha)/2,0\}$ which yields
\begin{multline*}
  \int_{0}^{1}\int_{0}^{1}yK(y,z)f(y)f(z)\int_{y}^{y+z}x^{\rho-2}\dx\dz\dy\\*
  \leq C\int_{0}^{1}y^{1+\alpha}f(y)\int_{0}^{1}z^{\beta}f(z)y^{\rho-a_{1}-1}z^{a_{1}}\dz\dy\\*
  +C\int_{0}^{1}y^{1+\beta}f(y)\int_{0}^{1}z^{\alpha}f(z)y^{\rho-a_{2}-1}z^{a_{2}}\dz\dy.
 \end{multline*}
If we plug in the values of $a_{1}$ and $a_{2}$ it follows together with $\lambda=\alpha+\beta$ and Lemma~\ref{Lem:moment:origin} that
\begin{multline}\label{eq:pos:exp:3}
 \int_{0}^{1}\int_{0}^{1}yK(y,z)f(y)f(z)\int_{y}^{y+z}x^{\rho-2}\dx\dz\dy\\*
 \leq C\int_{0}^{1}y^{\max\{(\lambda+\rho)/2,\alpha+\rho\}}f(y)\dy\int_{0}^{1}z^{\max\{(\lambda+\rho)/2,\beta\}}f(z)\dz\\*
 +C\int_{0}^{1}y^{\max\{(\lambda+\rho)/2,\beta+\rho\}}f(y)\dy\int_{0}^{1}z^{\max\{(\lambda+\rho)/2,\alpha\}}f(z)\dz\leq C.
\end{multline}
Note that it holds $(\lambda+\rho)/2>\lambda$ since we have $\rho>\lambda$ by assumption. We next consider the second integral on the right-hand side of~\eqref{eq:pos:exp:2} for which we again use the upper bound on $K$ given by~\eqref{eq:Ass:ker:2} as well as Lemma~\ref{Lem:inner:integral} with $a=1$. This then yields 
\begin{multline*}
  \int_{1}^{R}\int_{0}^{1}yK(y,z)f(y)f(z)\int_{y}^{y+z}x^{\rho-2}\dx\dz\dy\\*
  \leq C\int_{1}^{\infty}\int_{0}^{1}\bigl(y^{\alpha+\rho-1}z^{1+\beta}+y^{\beta+\rho-1}z^{1+\alpha}\bigr)f(y)f(z)\dz\dy.
 \end{multline*}
We now use that $y^{\rho-1}\leq 1$ if $y\geq 1$ which gives together with Lemma~\ref{Lem:moment:origin} and~\eqref{eq:min:moment:bd} that
\begin{equation}\label{eq:pos:exp:4}
 \begin{split}
  &\hphantom{{}\leq{}}\int_{1}^{R}\int_{0}^{1}yK(y,z)f(y)f(z)\int_{y}^{y+z}x^{\rho-2}\dx\dz\dy\\
  &\leq C\int_{1}^{\infty}y^{\alpha}f(y)\dy\int_{0}^{1}z^{1+\beta}f(z)\dz+C\int_{1}^{\infty}y^{\beta}f(y)\dy\int_{0}^{1}z^{1+\alpha}f(z)\dz\leq C.
 \end{split}
\end{equation}
The third integral on the right-hand side of~\eqref{eq:pos:exp:2} can be estimated similarly. Precisely, if we use the upper bound~\eqref{eq:Ass:ker:2} on $K$ and Lemma~\ref{Lem:inner:integral} with $a=0$ together with Lemma~\ref{Lem:moment:origin} and~\eqref{eq:min:moment:bd} we get
\begin{multline}\label{eq:pos:exp:5}
  \int_{0}^{1}\int_{1}^{\infty}yK(y,z)f(y)f(z)\int_{y}^{y+z}x^{\rho-2}\dx\dz\dy\\*
  \leq C\int_{0}^{1}\int_{1}^{\infty}\bigl(y^{\alpha+\rho}z^{\beta}+y^{\beta+\rho}z^{\alpha}\bigr)f(y)f(z)\dz\dy\leq C.
\end{multline}
It thus remains to estimate the fourth integral on the right-hand side of~\eqref{eq:pos:exp:2} for which we proceed similarly, i.e.\@ we fix $\gamma>\lambda$ according to~\eqref{eq:min:moment:bd}, use~\eqref{eq:Ass:ker:2} and apply Lemma~\ref{Lem:inner:integral} once with $a=\gamma-\beta$ and once with $a=\gamma-\alpha$. Since $\gamma\geq \beta\geq\alpha$, this then yields
\begin{multline*}
 \int_{1}^{R}\int_{1}^{\infty}yK(y,z)f(y)f(z)\int_{y}^{y+z}x^{\rho-2}\dx\dz\dy\\*
 \leq C\int_{1}^{R}y^{1+\alpha}f(y)\int_{1}^{\infty}z^{\beta}f(z)y^{\rho-\gamma+\beta-1}z^{\gamma-\beta}\dz\dy\\*
 +C\int_{1}^{R}y^{1+\beta}f(y)\int_{1}^{\infty}z^{\alpha}f(z)y^{\rho-\gamma+\alpha-1}z^{\gamma-\alpha}\dz\dy.
\end{multline*}
Together with~\eqref{eq:min:moment:bd} we get
\begin{multline}\label{eq:pos:exp:6}
 \int_{1}^{R}\int_{1}^{\infty}yK(y,z)f(y)f(z)\int_{y}^{y+z}x^{\rho-2}\dx\dz\dy\\*
 \leq C\int_{1}^{R}y^{\lambda+\rho-\gamma}f(y)\dy\int_{1}^{\infty}z^{\gamma}f(z)\dz\leq C\int_{1}^{R}y^{\lambda+\rho-\gamma}f(y)\dy.
\end{multline}
Thus, if we summarise~\cref{eq:pos:exp:1,eq:pos:exp:2,eq:pos:exp:3,eq:pos:exp:4,eq:pos:exp:5,eq:pos:exp:6} it follows
\begin{equation}\label{eq:pos:exp:7}
 R^{\rho-1}\int_{0}^{R}yf(y)\dy\leq C+C\int_{1}^{R}y^{\lambda+\rho-\gamma}f(y)\dy.
\end{equation}
Now, we have to distinguish the two cases $\lambda+\rho-\gamma\leq \gamma$ or equivalently $\gamma\geq (\lambda+\rho)/2$ and $\lambda+\rho-\gamma>\gamma$ which means $\gamma<(\lambda+\rho)/2$. In the first case, we can argue directly since due to~\eqref{eq:min:moment:bd} the right-hand side of~\eqref{eq:pos:exp:7} is bounded and we thus find
\begin{equation*}
 R^{\rho-1}\int_{0}^{R}yf(y)\dy\leq C \quad \text{or equivalently}\quad \int_{0}^{R}yf(y)\dy\leq CR^{1-\rho},
\end{equation*}
which shows the claim in this case. On the other hand, if $\lambda+\rho-\gamma>\gamma$ we have to iterate the previous estimate. Precisely, we can further estimate the right-hand side of~\eqref{eq:pos:exp:7} together with~\eqref{eq:min:moment:bd} to get
\begin{equation*}
 R^{\rho-1}\int_{0}^{R}yf(y)\dy\leq C+C\int_{1}^{R}y^{\lambda+\rho-2\gamma}y^{\gamma}f(y)\dy\leq C+CR^{\lambda+\rho-2\gamma}.
\end{equation*}
In the last step we used that $\lambda+\rho-\gamma>\gamma$, i.e.\@ $\lambda+\rho-2\gamma>0$. Since we also assume $R\geq 1$, the first constant on the right-hand side can be estimated by the second term and we further conclude
\begin{equation*}
 \int_{0}^{R}yf(y)\dy\leq CR^{\lambda-2\gamma-1}\qquad \text{for }R\geq 1.
\end{equation*}
A dyadic argument then shows that
\begin{equation}\label{eq:pos:exp:8}
 \int_{1}^{\infty}x^{\nu}f(x)\dx\leq C_{\nu} \qquad \text{for all }\nu<2\gamma-\lambda.
\end{equation}
If we recall~\eqref{eq:pos:exp:7} we again have to distinguish two cases, namely $\lambda+\rho-2\gamma<2\gamma-\lambda$ and $\lambda+\rho-2\gamma\geq 2\gamma-\lambda$. If the first inequality holds, the claim follows since then the right-hand side of~\eqref{eq:pos:exp:7} is uniformly bounded due to~\eqref{eq:pos:exp:8}. On the other hand, if the second case applies, we have to iterate further. Precisely, due to~\eqref{eq:pos:exp:8} and Lemma~\ref{Lem:moment:origin} we obtain from~\eqref{eq:pos:exp:7} for each $\gamma_{1}<2\gamma-\lambda$ that
\begin{equation*}
 R^{\rho-1}\int_{0}^{R}yf(y)\dx\leq C+\int_{1}^{R}y^{\lambda+\rho-\gamma-\gamma_{1}}y^{\gamma_{1}}f(y)\dy\leq C_{\gamma_{1}}R^{\lambda+\rho-\gamma-\gamma_{1}} \quad \text{for }R\geq 1.
\end{equation*}
As before, a dyadic argument thus gives
\begin{equation*}
 \int_{1}^{\infty}x^{\nu}f(x)\dx\leq C_{\nu} \quad \text{for all } \nu<3\gamma-2\lambda.
\end{equation*}
If we continue this procedure iteratively, we obtain after a finite number of steps that it holds
\begin{equation*}
 \int_{1}^{\infty}x^{\nu}f(x)\dx\leq C_{\nu} \quad \text{for all }\nu<\gamma+n(\gamma-\lambda)
\end{equation*}
for some $n\in\N$ with $\lambda+\rho-2\gamma-n(\gamma-\lambda)<0$. Then the claim follows as above, since the right-hand side of~\eqref{eq:pos:exp:7} can be bounded uniformly with respect to $R$.
\end{proof}

\subsection{Proof of Proposition~\ref{Prop:asymptotics:averaged}}

We are now prepared to give the proof of Proposition~\ref{Prop:asymptotics:averaged}.

\begin{proof}[Proof of Proposition~\ref{Prop:asymptotics:averaged}]
 According to \cref{Prop:tail:averaged,Prop:bounded:kernel} there exists for each self-similar profile $f$ a constant $C_{f}>0$ such that it holds $\int_{0}^{R}xf(x)\dx\leq C_{f}R^{1-\rho}$ for all $R>0$. 
 
 For a given self-similar profile $f$ we define the function $M_{f}(R)\vcc=\int_{0}^{R}xf(x)\dx$ and note that due to the non-negativity of $f$ it follows immediately from~\eqref{eq:self:sim} that 
 \begin{equation*}
  M_{f}'(R)\geq \frac{1-\rho}{R}M(R) \quad \text{or equivalently}\quad \del_{R}\bigl(R^{\rho-1}M_{f}(R)\bigr)\geq 0.
 \end{equation*}
 Thus, the function $R^{\rho-1}M_{f}(R)$ is non-decreasing and according to \cref{Prop:tail:averaged,Prop:bounded:kernel} we also have $R^{\rho-1}M_{f}(R)\leq C_{f}$ uniformly with respect to $R$.
 
 If we consider now the rescaling $\tilde{f}(x)=a^{1+\lambda}f(ax)$ one directly verifies that $\tilde{f}$ satisfies $\int_{0}^{R}x\tilde{f}(x)\dx\leq C_{f}a^{\lambda-\rho}R^{1-\rho}$ for all $R>0$ and all $a>0$, i.e.\@ $R^{\rho-1}M_{\tilde{f}}(R)\leq a^{\lambda-\rho}C_{f}$. Due to the monotonicity of $R^{\rho-1}M_{\tilde{f}}(R)$, we can thus fix the parameter $a>0$ such that it holds
 \begin{equation*}
  \lim_{R\to\infty}R^{\rho-1}M_{\tilde{f}}(R)=1.
 \end{equation*}
 This then already shows that $\int_{0}^{R}y\tilde{f}(y)\dy\sim R^{1-\rho}$ for $R\to\infty$. On the other hand, we have by monotonicity that $R^{\rho-1}M_{\tilde{f}}(R)\leq N^{\rho-1}M_{\tilde{f}}(N)$ for each $N\geq R$. Thus, taking the limit $N\to\infty$ together with the choice of $a$ we get $R^{\rho-1}M_{\tilde{f}}(R)\leq 1$ or equivalently $\int_{0}^{R}y\tilde{f}(y)\dy\leq R^{1-\rho}$, which finishes the proof. 
\end{proof}

\begin{remark}\label{Rem:rescaling}
 From now on we normalise all self-similar profiles according to Proposition~\ref{Prop:asymptotics:averaged}.
\end{remark}

\subsection{Moment estimates}
 
 As a consequence of Proposition~\ref{Prop:tail:averaged} and Lemma~\ref{Lem:zero:averaged} we provide now several moment estimates which we will use frequently in the following.
 
 \begin{lemma}\label{Lem:moment:estimates}
  Let $f$ be a self-similar profile according to the normalisation given by Proposition~\ref{Prop:asymptotics:averaged} and let $\nu>0$. Then we have the estimates
  \begin{align}
   \int_{x}^{\infty}y^{\chi}f(y)\dy&\leq C x^{\chi-\rho} && \text{for all } \chi<\rho \text{ and for all } x>0,\label{eq:moment:1}\\
   \int_{x}^{\infty}y^{\chi}f(y)\dy&\leq C x^{\chi-\lambda} && \text{for all } \chi<\lambda  \text{ and for all } x>0,\label{eq:moment:1.5}\\
   \int_{0}^{x}y^{\chi}f(y)\dy&\leq Cx^{\chi-\lambda} && \text{for all } \chi>\lambda  \text{ and for all } x>0,\label{eq:moment:2}\\
   \int_{0}^{x}y^{\chi}f(y)\dy&\leq Cx^{\max\{\chi-\rho,1-\rho\}} && \text{for all } \chi>\lambda  \text{ and for all } x\geq 1,\label{eq:moment:2.5}\\
   \int_{0}^{x}y^{\chi}f(y)\dy&\leq C_{\nu}x^{\max\{\chi-\rho+\nu,0\}} && \text{for all } \chi>\lambda  \text{ and for all } x\geq 1,\label{eq:moment:3}\\
   \int_{0}^{\infty}y^{\chi}f(y)\dy&<C && \text{for all } \chi\in(\lambda,\rho)\label{eq:moment:4}.
  \end{align}
   These estimates remain true also without the rescaling of Proposition~\ref{Prop:asymptotics:averaged} while then the constants depend on $f$.
 \end{lemma}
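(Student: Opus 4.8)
The plan is to obtain all six estimates from one and the same dyadic decomposition, feeding in only the two cumulative bounds already at our disposal — the (possibly unnormalised) tail bound $\int_0^R yf(y)\dy\le C_f R^{1-\rho}$ from \cref{Prop:tail:averaged,Prop:bounded:kernel} and the near-origin average $\int_R^{2R}xf(x)\dx\le CR^{1-\lambda}$ from \cref{Lem:zero:averaged} — together with \cref{Lem:moment:origin} for the contribution of $(0,1)$. The elementary fact behind everything is that on a dyadic block $[2^k\xi,2^{k+1}\xi]$ one has $y^{\chi}\le c_{\chi}(2^k\xi)^{\chi-1}y$ with $c_\chi\vcc=\max\{1,2^{\chi-1}\}$ (distinguishing the sign of $\chi-1$), so that
\begin{equation*}
 \int_{2^k\xi}^{2^{k+1}\xi}y^{\chi}f(y)\dy\le c_{\chi}(2^k\xi)^{\chi-1}\int_{2^k\xi}^{2^{k+1}\xi}yf(y)\dy,
\end{equation*}
and the remaining integral can be estimated either by $C_f(2^{k+1}\xi)^{1-\rho}$ (the tail bound) or by $C(2^k\xi)^{1-\lambda}$ (\cref{Lem:zero:averaged}), whichever fits.

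For \eqref{eq:moment:1} I would sum over $k\ge0$ with $\xi=x$, using the tail bound on every block; this produces $\int_x^\infty y^\chi f(y)\dy\le C_f x^{\chi-\rho}\sum_{k\ge0}2^{k(\chi-\rho)}$, and the geometric series converges exactly when $\chi<\rho$. For \eqref{eq:moment:1.5} one splits the same sum at the first $k$ with $2^k x\ge1$: on the low blocks \cref{Lem:zero:averaged} gives a summable factor $2^{k(\chi-\lambda)}$ times $x^{\chi-\lambda}$ (here $\chi<\lambda$), while the high blocks contribute a bounded amount which for $x\le1$ is itself $\le C x^{\chi-\lambda}$; for $x\ge1$ one just quotes \eqref{eq:moment:1} and the inequality $x^{\chi-\rho}\le x^{\chi-\lambda}$. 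Estimate \eqref{eq:moment:2} is the reflection of this: summing over the blocks $[2^{-k-1}x,2^{-k}x]$, $k\ge0$, and using \cref{Lem:zero:averaged} gives $\int_0^x y^\chi f(y)\dy\le Cx^{\chi-\lambda}\sum_{k\ge0}2^{-k(\chi-\lambda)}$, convergent since $\chi>\lambda$.

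For \eqref{eq:moment:2.5} and \eqref{eq:moment:3} I would write $\int_0^x=\int_0^1+\int_1^x$, bound $\int_0^1 y^\chi f(y)\dy$ by a constant via \cref{Lem:moment:origin} (using $\chi>\lambda$), and decompose $\int_1^x$ over the blocks $[2^k,2^{k+1}]$ with $0\le k\le\log_2 x$, applying the tail bound on each; this leaves $\sum_{k=0}^{\lfloor\log_2 x\rfloor}2^{k(\chi-\rho)}$, which is $O(x^{\chi-\rho})$ for $\chi>\rho$, $O(\log x)$ for $\chi=\rho$ and $O(1)$ for $\chi<\rho$. Comparing these three outcomes with the claimed exponents $\max\{\chi-\rho,1-\rho\}$ and $\max\{\chi-\rho+\nu,0\}$ — using $x\ge1$, $\rho<1$ and $\log x\le C_\nu x^\nu$ to dispose of the borderline $\chi=\rho$ — yields the two estimates. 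Finally \eqref{eq:moment:4} is immediate: $\int_0^1 y^\chi f(y)\dy\le C$ by \cref{Lem:moment:origin} and $\int_1^\infty y^\chi f(y)\dy\le C$ by \eqref{eq:moment:1} at $x=1$, both valid for $\lambda<\chi<\rho$. No step is a genuine obstacle; the only care needed is the bookkeeping of the case distinctions ($\chi$ against $1$, and against $\lambda$ and $\rho$) and, at $\chi=\rho$ in \eqref{eq:moment:3}, absorbing the logarithm into the arbitrarily small power $x^\nu$. The last sentence of the statement follows by keeping track of the dependence on $f$ of the constants in \cref{Prop:tail:averaged,Prop:bounded:kernel}.
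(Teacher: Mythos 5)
Your proposal is correct and follows essentially the same route as the paper: dyadic decompositions feeding in the cumulative tail bound from Propositions~\ref{Prop:tail:averaged}/\ref{Prop:bounded:kernel}, the near-origin average from Lemma~\ref{Lem:zero:averaged}, and the split at $1$ together with Lemma~\ref{Lem:moment:origin} for the $\int_0^x$ estimates. Two small remarks on economy rather than correctness: Lemma~\ref{Lem:zero:averaged} holds for \emph{all} $R>0$, so for \eqref{eq:moment:1.5} you need not split at $2^k x\geq 1$ at all — applying Lemma~\ref{Lem:zero:averaged} on every block of $[x,\infty)$ gives $x^{\chi-\lambda}\sum_{k\geq0}2^{k(\chi-\lambda)}$ directly, convergent since $\chi<\lambda$; and for \eqref{eq:moment:2.5}, \eqref{eq:moment:3} the paper avoids a second dyadic sum by simply writing $y^{\chi}=y^{\chi-1}\cdot y$ (respectively $y^{\chi}=y^{\chi-\rho+\nu}\cdot y^{\rho-\nu}$) on $[1,x]$, pulling the first factor out as $x^{\max\{\chi-1,0\}}$ (respectively $x^{\max\{\chi-\rho+\nu,0\}}$), and invoking the cumulative bound $\int_0^x yf\leq x^{1-\rho}$ (respectively \eqref{eq:moment:1} at $x=1$); this sidesteps the $\chi=\rho$ borderline and the logarithm entirely.
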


 \begin{proof}
 The estimates~\cref{eq:moment:1,eq:moment:1.5,eq:moment:2,eq:moment:3,eq:moment:4} follow easily from Proposition~\ref{Prop:asymptotics:averaged} and Lemma~\ref{Lem:zero:averaged} together with a dyadic decomposition. The same kind of argument has already been used several times (e.g.\@ \cite{NiV13a,NTV16a}) but to recall the idea we show~\eqref{eq:moment:1}. Precisely, we split the integral $\int_{x}^{\infty}(\cdots)\dy$ and use Proposition~\ref{Prop:asymptotics:averaged} to get
  \begin{multline*}
    \int_{x}^{\infty}y^{\chi}f(y)\dy=\sum_{n=0}^{\infty}\int_{2^{n}x}^{2^{n+1}x}y^{\chi-1}yf(y)\dy\\*
    \leq 2^{\max\{\chi-1,0\}}\sum_{n=0}^{\infty}2^{(\chi-1)n}x^{\chi-1}\int_{0}^{2^{n+1}x}yf(y)\dy\\*
    \leq 2^{\max\{\chi-\rho,1-\rho\}}x^{\chi-\rho}\sum_{n=0}^{\infty}2^{(\chi-\rho)n}=Cx^{\chi-\rho}.
   \end{multline*}
 The sum in the last step is finite due to the condition $\chi<\rho$. Using Lemma~\ref{Lem:zero:averaged} instead of Proposition~\ref{Prop:asymptotics:averaged} we similarly obtain~\cref{eq:moment:1.5,eq:moment:2}. The estimate~\eqref{eq:moment:2.5} follows from~\eqref{eq:moment:2} together with Proposition~\ref{Prop:asymptotics:averaged}. Precisely, for $x\geq 1$ we have
 \begin{multline*}
   \int_{0}^{x}y^{\chi}f(y)\dy=\int_{0}^{1}y^{\chi}f(y)\dy+\int_{1}^{x}y^{\chi-1}yf(y)\dy\\*
   \leq C+Cx^{\max\{\chi-1,0\}}x^{1-\rho}\leq Cx^{\max\{\chi-\rho,1-\rho\}}.
  \end{multline*}
 In the last step we additionally used that $1\leq x^{\max\{\chi-\rho,1-\rho\}}$ for $x\geq 1$ since the exponent is non-negative. Similarly, we deduce from~\cref{eq:moment:1,eq:moment:2} that
 \begin{equation*}
  \begin{split}
   \int_{0}^{x}y^{\chi}f(y)\dy&=\int_{0}^{1}y^{\chi}f(y)\dy+\int_{1}^{x}y^{\chi-\rho+\nu}y^{\rho-\nu}f(y)\dy\\
   &\leq C+x^{\max\{\chi-\rho+\nu,0\}}\int_{1}^{\infty}y^{\rho-\nu}f(y)\dy\leq C_{\nu}x^{\max\{\chi-\rho+\nu,0\}}.
  \end{split}
 \end{equation*}
  To obtain~\eqref{eq:moment:4} we just use the splitting $\int_{0}^{\infty}(\cdots)\dy=\int_{0}^{1}(\cdots)\dy+\int_{1}^{\infty}(\cdots)\dy$ together with~\cref{eq:moment:1,eq:moment:2}.
 \end{proof}

\section{Asymptotic tail behaviour}\label{Sec:asymptotic:tail:behaviour}

In this section we will provide the proof of Theorem~\ref{Thm:asymptotics}. The key for this will be the following result which states that each self-similar profile decays at least as fast as $x^{-1-\rho}$ for large values of $x$.

\begin{proposition}\label{Prop:pointwise:decay}
 Let $f$ be a self-similar profile. There exist constants $C,R_{*}>0$ such that it holds
 \begin{equation}\label{eq:decay:0}
  f(x)\leq CR^{-1-\rho}\qquad \text{if } x\in[R,2R]
 \end{equation}
 for all $R\geq R_{*}$. In particular this means that $f(x)\leq 2^{1+\rho}Cx^{-1-\rho}$ for $x\geq R_{*}$. Furthermore, there exists $\delta>0$ such that 
 \begin{equation}\label{eq:I:decay:0}
  I[f]\vcc=\int_{0}^{x}\int_{x-y}^{\infty}yK(y,z)f(y)f(z)\dz\dy\leq Cx^{1-\rho-\delta}
 \end{equation}
 if $x\geq 2R_{*}$.
\end{proposition}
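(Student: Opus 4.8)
The plan is to prove \eqref{eq:I:decay:0} first and then read off \eqref{eq:decay:0} from it. Indeed, once \eqref{eq:I:decay:0} is available, inserting it together with the bound $\int_{0}^{x}yf(y)\,\dy\le Cx^{1-\rho}$ of Proposition~\ref{Prop:tail:averaged} into \eqref{eq:self:sim} gives $x^{2}f(x)\le Cx^{1-\rho}+Cx^{1-\rho-\delta}\le Cx^{1-\rho}$ for almost every sufficiently large $x$, hence $f(x)\le Cx^{-1-\rho}\le CR^{-1-\rho}$ whenever $x\in[R,2R]$ and $R\ge 2R_{*}$. So the whole proof reduces to \eqref{eq:I:decay:0}.

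To estimate $I[f](x)$ for large $x$ I would split the domain $\{0<y<x,\ z>x-y\}$ into the three regions (i) $0<y<x/2$ (which forces $z>x/2$); (ii) $x/2<y<x$ together with $z>x/2$; and (iii) $x/2<y<x$ together with $x-y<z<x/2$. In regions (i) and (ii) one of the variables is comparable to $x$ and the other is not; bounding $K$ by $C_{*}(y^{\alpha}z^{\beta}+y^{\beta}z^{\alpha})$, the first is then controlled by a decaying tail moment $\int_{x/2}^{\infty}z^{\chi}f(z)\,\dz\le Cx^{\chi-\rho}$ (admissible since $\alpha\le\beta<\rho$, cf.\ \eqref{eq:moment:1}) and the second by \eqref{eq:moment:2.5}, \eqref{eq:moment:3} or by the crude bound $\int_{x/2}^{x}y^{1+\alpha}f(y)\,\dy\le Cx^{\alpha}\int_{0}^{x}yf(y)\,\dy\le Cx^{1+\alpha-\rho}$. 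Summing the exponents and using $\lambda<\rho$ and $\beta<\rho$, one checks that the resulting powers of $x$ are all strictly below $1-\rho$; the critical exponent $1-\rho$ would only appear if two crude bounds had to be combined, which does not occur here. Thus regions (i) and (ii) already satisfy \eqref{eq:I:decay:0} with a suitable $\delta>0$, and no pointwise information on $f$ is needed for them.

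Region (iii), where $y$ is comparable to $x$ while $z$ may be arbitrarily small, is the crux. By Fubini's theorem it equals $\int_{0}^{x/2}z^{\chi}f(z)\int_{x-z}^{x}y^{1+\chi'}f(y)\,\dy\,\dz$ with $(\chi,\chi')\in\{(\beta,\alpha),(\alpha,\beta)\}$; pulling out the comparable factor $y^{1+\chi'}\le Cx^{1+\chi'}$, everything reduces to controlling $\int_{x-z}^{x}f(y)\,\dy$ over the short interval of length $z$ sitting near $x$. The crude bound $\int_{x-z}^{x}f(y)\,\dy\le\tfrac{2}{x}\int_{0}^{x}yf(y)\,\dy\le Cx^{-\rho}$ is just barely insufficient---it reproduces exactly the exponent $1-\rho$ (note $\chi+\chi'=\lambda$)---so one must exploit the smallness of the interval, which requires a pointwise decay bound $f(y)\le Cy^{-1-\sigma}$ for large $y$. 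This is obtained by an iteration on the exponent $\sigma$: a first crude pointwise bound with some $\sigma_{0}>\lambda$ is extracted from \eqref{eq:self:sim} using \eqref{eq:moment:2}, \eqref{eq:moment:2.5} and the extra integrability \eqref{eq:min:moment:bd} (this is where $\gamma>\lambda$, rather than merely $\gamma=\lambda$, is used); then one re-runs the estimate of $I[f]$ with the pointwise bound currently at hand, each pass raising $\sigma$ by a fixed positive amount through \eqref{eq:self:sim}, until after finitely many steps $f(y)\le Cy^{-1-\rho}$ for $y\ge R_{*}$. Feeding this back into region (iii), writing $\int_{x-z}^{x}f(y)\,\dy\le Cz\,(x-z)^{-1-\rho}\le Czx^{-1-\rho}$, and carrying out one further dyadic split of the $z$-integral---$z<x/4$, where the short interval supplies the gain, versus $x/4<z<x/2$, where $z$ itself carries a decaying tail moment---then yields the strictly subcritical bound \eqref{eq:I:decay:0}.

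Finally, the scheme has to be run separately according to the signs of $\alpha$ and $\beta$ (the three cases $\alpha\le\beta<0$, $\alpha<0\le\beta$, $0\le\alpha\le\beta$), because the crude step uses the near-origin moments $\int_{0}^{1}z^{\alpha}f(z)\,\dz$ and $\int_{0}^{1}z^{\beta}f(z)\,\dz$, which are finite only when $\alpha<0$, respectively $\beta<0$ (Lemma~\ref{Lem:moment:origin} gives finiteness only for exponents $>\lambda$, while $\alpha>\lambda\Leftrightarrow\beta<0$ and $\beta>\lambda\Leftrightarrow\alpha<0$). When $\alpha\le\beta<0$ one may freely drop the constraint $z>x-y$ in region (iii) and the iteration starts at once; when $\beta\ge0$ (and a fortiori when $\alpha\ge0$) the constraint $z>x-y$ is genuinely needed from the outset, and one must keep it active and instead use the averaged estimate of Lemma~\ref{Lem:zero:averaged} in the form $\int_{x-y}^{x/2}z^{\alpha}f(z)\,\dz\lesssim(x-y)^{\alpha-\lambda}$ (with a logarithm in the borderline case $\alpha=0$). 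I expect precisely this---getting the iteration started when $\alpha\ge0$ while keeping the constraint $z>x-y$ alive, and checking in each of the three cases that the accumulated exponents stay below $1-\rho$---to be the main technical obstacle; the iteration itself, and the outer regions (i) and (ii), are comparatively routine given the moment estimates of Lemma~\ref{Lem:moment:estimates}.
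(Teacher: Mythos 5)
Your overall structure (case split by signs of $\alpha,\beta$, decomposition of $I[f]$ into outer regions where one variable is comparable to $x$ and the short-interval region (iii), recognition that region (iii) requires pointwise control of $f$ near $x$) is the right one, and you correctly identify that ``getting the iteration started when $\alpha\ge0$'' is the crux. However, the mechanism you propose for starting that iteration does not work, and this is a genuine gap.

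You claim that ``a first crude pointwise bound with some $\sigma_{0}>\lambda$ is extracted from \eqref{eq:self:sim} using \eqref{eq:moment:2}, \eqref{eq:moment:2.5} and the extra integrability \eqref{eq:min:moment:bd}''. This cannot be done. A self-similar profile is a priori only in $L^{1}_{\mathrm{loc}}$, and the equation reads $x^{2}f(x)=(1-\rho)\int_{0}^{x}yf(y)\dy+I[f](x)$ a.e.; to obtain a \emph{pointwise} bound on $f(x)$ one must bound $I[f](x)$, and the problematic contribution in region~(iii) is, for $\beta\ge0$,
\begin{equation*}
 \int_{x/2}^{x}y^{1+\beta}f(y)\int_{x-y}^{x/2}z^{\alpha}f(z)\dz\dy \;\lesssim\; x^{1+\beta}\int_{x/2}^{x}f(y)(x-y)^{-\beta}\dy,
\end{equation*}
where the inner integral has been bounded by $C(x-y)^{\alpha-\lambda}=C(x-y)^{-\beta}$ via \eqref{eq:moment:1.5}. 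The functional $f\mapsto\int_{x/2}^{x}f(y)(x-y)^{-\beta}\dy$ is \emph{not} bounded on the set of $f\in L^{1}_{\mathrm{loc}}$ satisfying the moment estimates of Lemma~\ref{Lem:moment:estimates}: taking $f$ with mass $\sim x^{-\rho}$ concentrated on $[x-\delta,x]$ makes it of order $x^{-\rho}\delta^{-\beta}\to\infty$ as $\delta\to0$. Hence no crude pointwise bound of the form $f(x)\le Cx^{-1-\sigma_{0}}$ with $\sigma_{0}>\lambda$ follows from the averaged information alone, and your iteration on the decay exponent never gets off the ground when $\beta\ge0$.

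The missing ingredient is Lemma~\ref{Lem:Young} (a variant of Young's convolution inequality). The paper's iteration is not on the decay exponent $\sigma$ but on the \emph{Lebesgue exponent} $r$: writing the region-(iii) contribution as a convolution $\int_{x/2}^{x}f(y)g(x-y)\dy$ with a kernel $g$ satisfying $g\in L^{p}$ for some $p>1$ (this requires $p(\beta+\nu)<1$, hence the choice \eqref{eq:choice:p}), one bounds $\|f\|_{L^{r}(2^{n}R,2^{n_{*}+1}R)}$ in terms of $\|f\|_{L^{q}(2^{n-1}R,2^{n_{*}+1}R)}$ with $1/p+1/q=1/r+1$. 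Starting from $q_{0}=1$ (which \emph{is} controlled by Proposition~\ref{Prop:tail:averaged}) and climbing over a finite chain of dyadic annuli yields the $L^{\infty}$ bound on $[2^{n_{*}}R,2^{n_{*}+1}R]$ after $n_{*}$ steps; when $\alpha\le0\le\beta$ an additional exponent-improving loop is appended. Your proposal never invokes such a bootstrap in integrability and therefore cannot bridge the gap from $L^{1}$ control to pointwise control. Relatedly, your logical ordering (prove \eqref{eq:I:decay:0} first, then read off \eqref{eq:decay:0}) is reversed compared to the paper and internally circular, since your proof of \eqref{eq:I:decay:0} presupposes the pointwise bound which is essentially \eqref{eq:decay:0}; the paper establishes \eqref{eq:decay:0} first via the Young iteration and then obtains \eqref{eq:I:decay:0} as an immediate consequence.
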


The proof of this result has to be split into three parts depending on the values of the exponents $\alpha$ and $\beta$. The main ingredient for the proof of Proposition~\ref{Prop:pointwise:decay} will be the following variant of Young's inequality which can be found for example as equation~(72) in~\cite{NiV13a}.

\begin{lemma}\label{Lem:Young}
 Let $d,D>0$ be constants with $d<D$ and $p,q,r\in[1,\infty]$ with $1/p+1/q=1/r+1$. If $h\in L^{q}(d/2,D)$ and $g\in L^{p}(0,D)$ then it holds
 \begin{equation*}
  \norm[\bigg]{\int_{x/2}^{x}h(y)g(x-y)\dy}_{L^{r}(d,D)}\leq \norm{h}_{L^{q}(d/2,D)}\norm{g}_{L^{p}(0,D)}.
 \end{equation*}
\end{lemma}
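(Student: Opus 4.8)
The plan is to derive this from the classical Young convolution inequality on the whole line by a cutoff-and-extension argument. First I would introduce the functions $\bar h = h\,\mathbf{1}_{(d/2,D)}$ and $\bar g = g\,\mathbf{1}_{(0,D)}$, viewed as functions on $(0,\infty)$ (extended by zero elsewhere). The point of the restricted domain $(x/2,x)$ in the convolution is that, for $x\in(d,D)$, the integrand $h(y)g(x-y)$ only ``sees'' $y\in(d/2,D)$ (since $y>x/2>d/2$ and $y<x<D$) and $x-y\in(0,x/2)\subset(0,D)$; hence on the relevant range we may replace $h$ and $g$ by $\bar h$ and $\bar g$ without changing the value. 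Concretely, for $x\in(d,D)$,
\begin{equation*}
 \int_{x/2}^{x}h(y)g(x-y)\dy = \int_{0}^{\infty}\bar h(y)\bar g(x-y)\,\mathbf{1}_{\{y>x/2\}}\dy,
\end{equation*}
and since the extra indicator only removes mass and the integrands are not assumed signed — here I would either invoke that in the intended application $h,g\ge 0$, or more safely bound pointwise by $\abs{\,\cdot\,}$ — one gets
\begin{equation*}
 \abs[\bigg]{\int_{x/2}^{x}h(y)g(x-y)\dy} \le (\abs{\bar h}*\abs{\bar g})(x)\qquad\text{for }x\in(d,D).
\end{equation*}

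Next I would apply the classical Young inequality $\norm{u*v}_{L^{r}(\R)}\le \norm{u}_{L^{q}(\R)}\norm{v}_{L^{p}(\R)}$, valid precisely under the exponent relation $1/p+1/q = 1/r+1$ with $p,q,r\in[1,\infty]$, to $u=\abs{\bar h}$ and $v=\abs{\bar g}$. This gives
\begin{equation*}
 \norm[\big]{\abs{\bar h}*\abs{\bar g}}_{L^{r}(\R)}\le \norm{\bar h}_{L^{q}(\R)}\norm{\bar g}_{L^{p}(\R)} = \norm{h}_{L^{q}(d/2,D)}\norm{g}_{L^{p}(0,D)},
\end{equation*}
where the last equality is just the definition of the zero-extensions. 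Finally, restricting the $L^r$ norm on the left from $\R$ down to the subinterval $(d,D)$ only decreases it, and combining with the pointwise bound from the first step yields
\begin{equation*}
 \norm[\bigg]{\int_{x/2}^{x}h(y)g(x-y)\dy}_{L^{r}(d,D)}\le \norm[\big]{\abs{\bar h}*\abs{\bar g}}_{L^{r}(d,D)}\le \norm{h}_{L^{q}(d/2,D)}\norm{g}_{L^{p}(0,D)},
\end{equation*}
which is the claim.

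The argument is essentially bookkeeping, so there is no deep obstacle; the one point that needs care is the passage from the truncated convolution $\int_{x/2}^{x}$ to the full convolution $\int_0^\infty$. One must check that the domain of integration $(x/2,x)$ is contained in the domain where $\bar h$ is supported times where $\bar g$ is supported for every $x$ in the target interval $(d,D)$ — this is exactly why the hypothesis asks for $h\in L^q(d/2,D)$ (note the lower endpoint $d/2$, not $d$) — and that dropping the indicator $\mathbf 1_{\{y>x/2\}}$ is harmless, which is clean when the functions are nonnegative (the setting in which the lemma is used) and otherwise requires passing to absolute values as above. I would also remark that the endpoint cases $p,q,r\in\{1,\infty\}$ are covered by the standard Young inequality without modification, so no separate treatment is needed.
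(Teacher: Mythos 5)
Your proof is correct, and there is no paper argument to compare it against: the lemma is imported from \cite{NiV13a} (cited there as equation~(72)) without proof, so any comparison must be to the standard derivation rather than to something in this text. The zero-extension reduction to classical Young's inequality on the line is exactly that standard derivation, and you carry out the one step that genuinely needs checking: for $x\in(d,D)$ and $y\in(x/2,x)$ one has $y>x/2>d/2$ and $y<x<D$, hence $y\in(d/2,D)$, while $x-y\in(0,x/2)\subset(0,D)$; this is precisely why the hypotheses require $h\in L^{q}(d/2,D)$ on the enlarged interval and $g\in L^{p}(0,D)$. Your decision to pass to absolute values before discarding the indicator of $\{y>x/2\}$ is also the right safeguard --- the lemma as stated does not assume $h,g\geq 0$, even though in the application $f\geq 0$ makes it moot --- and restricting the $L^{r}$ norm from $\R$ to $(d,D)$ at the end is clearly monotone. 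Nothing is missing; the argument is complete and would serve as a self-contained proof if one wished to avoid the external citation.
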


In~\cite{NiV13a} this result has been used qualitatively to show that self-similar profiles are locally bounded. We will proceed here similarly but we apply this estimate in a more quantitative way, i.e.\@ we choose $d$ and $D$ as multiples of $R$ and estimate precisely the dependence on $R$ of the appearing constants.

\subsection{The case $\alpha\leq \beta<0$}\label{Sec:neg:exponents}

We first consider the situation where the parameters $\alpha$ and $\beta$ satisfy $\alpha\leq \beta<0$. This is the easiest case since the statement follows rather directly, i.e.\@ an iteration argument is not necessary.

\begin{proof}[Proof of Proposition~\ref{Prop:pointwise:decay} for $\alpha\leq \beta<0$]
 We first split the integral $I[f]$ as
 \begin{equation}\label{eq:I:splitting}
  I[f](x)=\int_{0}^{x/2}\int_{x-y}^{\infty}(\cdots)\dy\dz+\int_{x/2}^{x}\int_{x-y}^{\infty}(\cdots)\dz\dy.
 \end{equation}
Together with the upper bound on $K$ in~\eqref{eq:Ass:ker:2} this yields the estimate
 \begin{multline}\label{eq:neg:decay:1}
     I[f](x)\leq C\int_{0}^{x/2}\int_{x/2}^{\infty}\bigl(y^{1+\alpha}z^{\beta}+y^{1+\beta}z^{\alpha}\bigr)f(y)f(z)\dz\dy\\*
     +C\int_{x/2}^{x}\int_{0}^{\infty}\bigl(y^{1+\alpha}z^{\beta}+y^{1+\beta}z^{\alpha}\bigr)f(y)f(z)\dz\dy.
 \end{multline}
From Lemma~\ref{Lem:moment:estimates} (i.e.\@ \cref{eq:moment:1,eq:moment:4}) we recall that
\begin{equation}\label{eq:neg:decay:2}
 \int_{x/2}^{\infty}z^{\alpha}f(z)\dz\leq Cx^{\alpha-\rho}\qquad \text{and}\qquad \int_{x/2}^{\infty}z^{\beta}f(z)\dz\leq Cx^{\beta-\rho}
\end{equation}
as well as
\begin{equation}\label{eq:neg:decay:3}
 \int_{0}^{\infty}z^{\alpha}f(z)\dz\leq C\qquad \text{and}\qquad \int_{0}^{\infty}z^{\beta}f(z)\dz\leq C.
\end{equation}
Note that for the last two estimates it is essential that we have $\alpha,\beta<0$ to get $\alpha,\beta>\lambda$ which allows to apply~\eqref{eq:moment:4}. Using~\eqref{eq:neg:decay:2} and~\eqref{eq:neg:decay:3} in~\eqref{eq:neg:decay:1} we find
\begin{equation}\label{eq:neg:decay:4}
 I[f](x)\leq C\int_{0}^{x/2}\bigl(x^{\beta-\rho}y^{1+\alpha}+x^{\alpha-\rho}y^{1+\beta}\bigr)f(y)\dy+C\int_{x/2}^{x}\bigl(y^{1+\alpha}+y^{1+\beta}\bigr)f(y)\dy.
\end{equation}
We next note that due to $\alpha,\beta>\lambda$ we get for $x\geq 2$ from Lemma~\ref{Lem:moment:estimates} (i.e.\@ \eqref{eq:moment:2.5}) that
\begin{equation*}
 \int_{0}^{x/2}y^{1+\alpha}f(y)\dy\leq Cx^{1-\rho}\qquad \text{and}\qquad \int_{0}^{x/2}y^{1+\beta}f(y)\dy\leq Cx^{1-\rho}.
\end{equation*}
 Moreover, we have $\int_{x/2}^{x}f(y)\dy=\int_{x/2}^{x}\frac{y}{y}f(y)\dy\leq Cx^{-1}x^{1-\rho}=Cx^{-\rho}$. If we use these estimates in~\eqref{eq:neg:decay:4} it follows
\begin{equation}\label{eq:neg:decay:5}
   I[f](x)\leq C\bigl(x^{1+\beta-2\rho}+x^{1+\alpha-2\rho}+x^{1+\alpha-\rho}+x^{1+\beta-\rho}\bigr)\leq Cx^{1-\rho+\beta}.
\end{equation}
Note that in the last step we used that $x\geq 2$ and $\alpha\leq \beta$. This then already shows~\eqref{eq:I:decay:0} with $\delta=-\beta$ which is positive by the assumption on $\beta$. To conclude the proof of the Proposition for $\alpha\leq\beta<0$ we use that~\eqref{eq:self:sim} reads as
\begin{equation*}
 f(x)=(1-\rho)x^{-2}\int_{0}^{x}yf(y)\dy+x^{-2}I[f](x).
\end{equation*}
Thus, \eqref{eq:neg:decay:5} and Proposition~\ref{Prop:tail:averaged} imply
\begin{equation*}
 f(x)\leq Cx^{-1-\rho}+Cx^{-1-\rho+\beta}\leq Cx^{-1-\rho}\qquad \text{for } x\geq 2
\end{equation*}
since $\beta<0$. The estimate~\eqref{eq:decay:0} can then be derived immediately for $R_{*}=2$.
\end{proof}

\subsection{The case $0<\alpha\leq \beta$}\label{Sec:pos:exp}

We next consider the case where both exponents $\alpha$ and $\beta$ are positive and the general strategy is similar to that one in Section~\ref{Sec:neg:exponents}, i.e.\@ we split the integral $I[f]$ and estimate the terms separately. However, now we cannot argue that directly but we have to use some iteration argument based on Lemma~\ref{Lem:Young} to conclude.

\begin{proof}[Proof of Proposition~\ref{Prop:pointwise:decay} for $0<\alpha\leq \beta$]
 We recall the splitting~\eqref{eq:I:splitting} which yields together with~\eqref{eq:Ass:ker:2} that
 \begin{multline}\label{eq:decay:1}
  I[f](x)\leq C\int_{0}^{x/2}\int_{x/2}^{\infty}\bigl(y^{1+\alpha}z^{\beta}+y^{1+\beta}z^{\alpha}\bigr)f(y)f(z)\dz\dy\\*
  +C\int_{x/2}^{x}y^{1+\alpha}f(y)\int_{x-y}^{\infty}z^{\beta}f(z)\dz\dy
  +C\int_{x/2}^{x}y^{1+\beta}f(y)\int_{x-y}^{\infty}z^{\alpha}f(z)\dz\dy.
 \end{multline}
 We recall from Lemma~\ref{Lem:moment:estimates} (i.e.\@ \cref{eq:moment:1,eq:moment:1.5}) that we have for all $x>0$ that
 \begin{equation*}
  \int_{x}^{\infty}y^{\chi}f(y)\dy\leq Cx^{\chi-\rho} \quad \text{and}\quad  \int_{x-y}^{\infty}z^{\chi}f(z)\dz\leq C(x-y)^{\chi-\rho}\qquad\text{ if } \chi<\rho.
 \end{equation*}
 If we use these two estimates with $\chi=\alpha$ and $\chi=\beta$ we deduce from~\eqref{eq:decay:1} that
 \begin{multline*}
  I[f](x)\leq C\int_{0}^{x/2}\bigl(y^{1+\alpha}x^{\beta-\rho}+y^{1+\beta}x^{\alpha-\rho}\bigr)f(y)\dy\\*
  +C\int_{x/2}^{x}y^{1+\alpha}(x-y)^{-\rho}f(y)\dy+C\int_{x/2}^{x}y^{1+\beta}(x-y)^{-\rho}f(y)\dy.
 \end{multline*}
 Since $0<\alpha\leq \beta$ we can further estimate this together with Proposition~\ref{Prop:tail:averaged} to get
 \begin{equation}\label{eq:decay:2}
  I[f](x)\leq Cx^{1+\lambda-2\rho}+Cx^{1+\alpha}\int_{x/2}^{x}f(y)(x-y)^{-\rho}\dy+Cx^{1+\beta}\int_{x/2}^{x}f(y)(x-y)^{-\rho}\dy.
 \end{equation}
 We now fix two parameters $p>1$ and $n_{*}\in\N$ such that
 \begin{equation}\label{eq:pos:choice:p}
  p=1+\frac{1}{n_{*}}\quad \text{with } n_{*}\text{ large enough such that} \quad 1<p<\frac{1}{\rho}.
 \end{equation}
 In terms of $I[f]$ equation~\eqref{eq:self:sim} reads as
 \begin{equation*}
  f(x)=(1-\rho)x^{-2}\int_{0}^{x}yf(y)\dy+x^{-2}I[f](x).
 \end{equation*}
 Thus, for $n<n_{*}$ we can estimate the $L^{r}$-norm of $f$ as 
 \begin{multline*}
  \norm{f}_{L^{r}(2^{n}R,2^{n_{*}+1}R)}\leq C\biggl(\int_{2^{n}R}^{2^{n_{*}+1}R}x^{-2r}\abs*{\int_{0}^{x}yf(y)\dy}^{r}\dx\biggr)^{1/r}\\*
  +\biggl(\int_{2^{n}R}^{2^{n_{*}+1}R}x^{-2r}\abs*{I[f](x)}^{r}\dx\biggr)^{1/r}.
 \end{multline*}
 Together with~\eqref{eq:decay:2} and Proposition~\ref{Prop:tail:averaged} we further obtain
 \begin{multline*}
   \norm{f}_{L^{r}(2^{n}R,2^{n_{*}+1}R)}\\*
   \leq C\biggl(\int_{2^{n}R}^{2^{n_{*}+1}R}x^{-r(1+\rho)}\dx\biggr)^{1/r}+C\biggl(\int_{2^{n}R}^{2^{n_{*}+1}R}x^{r(\lambda-2\rho-1)}\dx\biggr)^{1/r}\\*
   +C\biggl(\int_{2^{n}R}^{2^{n_{*}+1}R}x^{r(\alpha-1)}\abs[\bigg]{\int_{x/2}^{x}f(y)(x-y)^{-\rho}\dy}^{r}\dx\biggr)^{1/r}\\*
   +C\biggl(\int_{2^{n}R}^{2^{n_{*}+1}R}x^{r(\beta-1)}\abs[\bigg]{\int_{x/2}^{x}f(y)(x-y)^{-\rho}\dy}^{r}\dx\biggr)^{1/r}.
 \end{multline*}
 Computing the first two integrals explicitly and using Lemma~\ref{Lem:Young} we continue to estimate
 \begin{multline}\label{eq:decay:3}
     \norm{f}_{L^{r}(2^{n}R,2^{n_{*}+1}R)}\leq CR^{-1-\rho+\frac{1}{r}}+CR^{\lambda-2\rho-1+\frac{1}{r}}\\*
     +C\Bigl(R^{\alpha-1}+R^{\beta-1}\Bigr)\norm{f}_{L^{q}(2^{n-1}R,2^{n_{*}+1}R)}\norm{z^{-\rho}}_{L^{p}(0,2^{n_{*}+1}R)}.
 \end{multline}
Here $p,q$ and $r$ have to satisfy $1/p+1/q=1+1/r$. Next, we recall that $\rho>\lambda$ and $\beta\geq \alpha$ by assumption and thus we have 
\begin{equation}\label{eq:decay:4}
 R^{\lambda-2\rho-1+\frac{1}{r}}\leq R^{-1-\rho+\frac{1}{r}}\quad \text{and}\quad R^{\alpha-1}\leq R^{\beta-1} \qquad \text{for all } R\geq 1.
\end{equation}
Moreover, since $p\in(1,1/\rho)$ by~\eqref{eq:pos:choice:p} we note that it holds
 \begin{equation}\label{eq:decay:5}
  \norm{z^{-\rho}}_{L^{p}(0,2^{n_{*}+1}R)}\leq CR^{-\rho+\frac{1}{p}}.
 \end{equation}
 Thus, if we combine~\cref{eq:decay:3,eq:decay:4,eq:decay:5} it follows
  \begin{equation*}
  \norm{f}_{L^{r}(2^{n}R,2^{n_{*}+1}R)}\leq CR^{-1-\rho+\frac{1}{r}}+CR^{\beta-1-\rho+\frac{1}{p}}\norm{f}_{L^{q}(2^{n-1}R,2^{n_{*}+1}R)}.
 \end{equation*}
 Since $\beta<\rho$ by assumption, we can further estimate the second term on the right-hand side for $R\geq 1$ to get
 \begin{equation}\label{eq:decay:6}
  \norm{f}_{L^{r}(2^{n}R,2^{n_{*}+1}R)}\leq CR^{-1-\rho+\frac{1}{r}}+CR^{-1+\frac{1}{p}}\norm{f}_{L^{q}(2^{n-1}R,2^{n_{*}+1}R)}.
 \end{equation}
To conclude the proof for $0<\alpha\leq \beta$ we will now iteratively use this estimate. For this, we first take $n=0$, $r_{0}=p$ and $q_{0}=1$. Then as an immediate consequence of Proposition~\ref{Prop:tail:averaged} it follows
\begin{equation}\label{eq:est:L1:norm}
 \norm{f}_{L^{1}(R/2,2^{n_{*}+1}R)}=\int_{R/2}^{2^{n_{*}+1}R}\frac{1}{x}xf(x)\dx\leq CR^{-\rho}.
\end{equation}
Plugging this estimate in~\eqref{eq:decay:6} yields
\begin{equation*}
 \norm{f}_{L^{r_{0}}(R,2^{n_{*}+1}R)}\leq CR^{-1-\rho+\frac{1}{r_{0}}}\qquad \text{for all }R\geq 1.
\end{equation*}
In the next step we take $n=1$ and $q_{1}=r_{0}$ which requires to choose $r_{1}=\frac{p}{2-p}$. Thus, combining the previous estimate with~\eqref{eq:decay:6} it follows
\begin{equation*}
 \norm{f}_{L^{r_{1}}(2R,2^{n_{*}+1}R)}\leq C R^{-1-\rho+\frac{1}{r_{1}}}+CR^{-1+\frac{1}{p}+\frac{1}{r_{0}}-1-\rho}=CR^{-1-\rho+\frac{1}{r_{1}}}.
\end{equation*}
Proceeding in this way, we obtain for general $n<n_{*}$ that
\begin{equation*}
 \norm{f}_{L^{r_{n}}(2^{n}R,2^{n_{*}+1}R)}\leq CR^{-1-\rho+\frac{1}{r_{n}}}\qquad \text{with }r_{n}=\frac{p}{n(1-p)+1}.
\end{equation*}
 Thus, after a finite number of steps we get $r_{n_{*}}=\infty$ and it follows
 \begin{equation*}
  \norm{f}_{L^{\infty}(2^{n_{*}}R,2^{n_{*}+1}R)}\leq CR^{-1-\rho} \qquad \text{for all }R\geq 1.
 \end{equation*}
 This then proves the estimate~\eqref{eq:decay:0} for $0<\alpha\leq \beta$ if we choose $R_{*}=2^{n_{*}}$.
 
 To conclude the proof of the Proposition, it remains to show~\eqref{eq:I:decay:0} which is now an immediate consequence of~\eqref{eq:decay:0} and~\eqref{eq:decay:2}. Precisely, we have for $x\geq 2R_{*}$ that
 \begin{equation*}
  \begin{split}
   I[f](x)&\leq Cx^{1+\lambda-2\rho}+Cx^{\alpha-\rho}\int_{x/2}^{x}(x-y)^{-\rho}\dy+Cx^{\beta-\rho}\int_{x/2}^{x}(x-y)^{-\rho}\dy\\
   &=Cx^{1-\rho}\bigl(x^{\lambda-\rho}+x^{\alpha-\rho}+x^{\beta-\rho}\bigr).
  \end{split}
 \end{equation*}
 Thus, since $x\geq 1$ and $\lambda=\alpha+\beta$ with $0<\alpha\leq \beta$, we further deduce
 \begin{equation*}
  I[f](x)\leq Cx^{1-\rho-(\rho-\lambda)}
 \end{equation*}
 and the claim follows with $\delta=\rho-\lambda$ since $\lambda<\rho$ by assumption.
 \end{proof}

\subsection{The case $\alpha\leq 0\leq \beta$}

It remains to prove Proposition~\ref{Prop:pointwise:decay} for $\alpha\leq 0\leq \beta$ which is the most complicated case. The approach will be the same as in Section~\ref{Sec:pos:exp}, i.e.\@ we iteratively apply Lemma~\ref{Lem:Young} to get an estimate for the $L^{\infty}$-norm of $f$. However, depending on the values of $\rho,\lambda$ and $\beta$ the first bound that we obtain might not be enough and we have to start a second iteration to improve the $L^{\infty}$-bound further to conclude.

\begin{proof}[Proof of Proposition~\ref{Prop:pointwise:decay} for $\alpha\leq 0\leq\beta$]
 We first collect some estimates and fix some notation that we will need in the following, i.e.\@ we take $\nu\in(0,\min\{\rho-\lambda,1-\beta\})$. We then have the estimates
 \begin{align}
  \int_{x-y}^{\infty}z^{\alpha}f(z)\dz&\leq C\frac{(x-y)^{-\beta-\nu}}{(1+(x-y))^{\rho-\alpha-\beta-\nu}}\label{eq:mix:decay:1}\\
  \intertext{and}
  \int_{x-y}^{\infty}z^{\beta}f(z)\dz&\leq C\frac{(x-y)^{-\nu}}{(1+(x-y))^{\rho-\beta-\nu}}\label{eq:mix:decay:1.5}.
 \end{align}
To see this, we note that Lemma~\ref{Lem:moment:estimates} (i.e.\@ \cref{eq:moment:1,eq:moment:4}) yields
\begin{equation*}
 \begin{split}
  \int_{x-y}^{\infty}z^{\alpha}f(z)\dz&\leq (x-y)^{-\beta-\nu}\int_{x-y}^{\infty}z^{\alpha+\beta+\nu}f(z)\dz\\
  &\leq (x-y)^{-\beta-\nu}\begin{cases}
                           C_{\nu} & \text{if } (x-y)\leq 1\\
                           C_{\nu}(x-y)^{\alpha+\beta+\nu-\rho} & \text{if } (x-y)\geq 1.
                          \end{cases}
 \end{split}
\end{equation*}
From this, \eqref{eq:mix:decay:1} immediately follows, while for~\eqref{eq:mix:decay:1.5}, we can argue similarly using the splitting $z^{\beta}=z^{-\nu}z^{\beta+\nu}$. If we use now~\cref{eq:mix:decay:1,eq:mix:decay:1.5} together with the splitting in~\eqref{eq:decay:1}, we can deduce that 
\begin{multline}\label{eq:mix:decay:2}
 I[f](x)\leq C\int_{0}^{x/2}\bigl(y^{1+\alpha}x^{\beta-\rho}+y^{1+\beta}x^{\alpha-\rho}\bigr)f(y)\dy\\*
 +Cx^{1+\beta}\int_{x/2}^{x}f(y)\frac{(x-y)^{-\beta-\nu}}{(1+(x-y)^{\rho-\alpha-\beta-\nu}}\dy\\*
 +Cx^{1+\alpha}\int_{x/2}^{x}f(y)\frac{(x-y)^{-\nu}}{(1+(x-y))^{\rho-\beta-\nu}}\dy.
\end{multline}
Next, we use that from Lemma~\ref{Lem:moment:estimates} (i.e.\@ \eqref{eq:moment:2.5}) we have for all $x\geq 2$ that
\begin{equation*}
 \int_{0}^{x/2}y^{1+\alpha}f(y)\dy\leq Cx^{1-\rho}\qquad \text{and}\qquad \int_{0}^{x/2}y^{1+\beta}f(y)\dy\leq Cx^{1+\beta-\rho}
\end{equation*}
With these estimates and the fact that $\alpha\leq 0\leq \beta$ the first integral in~\eqref{eq:mix:decay:2} can be estimated for $x\geq 2$ as
\begin{equation*}
 \int_{0}^{x/2}\bigl(y^{1+\alpha}x^{\beta-\rho}+y^{1+\beta}x^{\alpha-\rho}\bigr)f(y)\dy\leq C\bigl(x^{1+\beta-2\rho}+x^{1+\alpha+\beta-2\rho}\bigr)\leq Cx^{1+\beta-2\rho}.
\end{equation*}
Thus, in summary we conclude from~\eqref{eq:mix:decay:2} for $x\geq 2$ that
\begin{multline}\label{eq:mix:I}
 I[f](x)\leq Cx^{1+\beta-2\rho}+Cx^{1+\beta}\int_{x/2}^{x}f(y)\frac{(x-y)^{-\beta-\nu}}{(1+(x-y)^{\rho-\alpha-\beta-\nu}}\dy\\*
 +Cx^{1+\alpha}\int_{x/2}^{x}f(y)\frac{(x-y)^{-\nu}}{(1+(x-y))^{\rho-\beta-\nu}}\dy.
\end{multline}
We then note that~\eqref{eq:self:sim} in terms of $I[f]$ can be written as
\begin{equation*}
 f(x)=(1-\rho)x^{-2}\int_{0}^{x}yf(y)\dy+x^{-2}I[f](x).
\end{equation*}
Thus, for $r\in[1,\infty)$ and for $n<n_{*}$ to be fixed later, we can compute 
\begin{multline*}
  \norm{f}_{L^{r}(2^{n}R,2^{n_{*}+1}R)}\\*
  \leq C\biggl(\int_{2^{n}R}^{2^{n_{*}+1}R}x^{-2r}\abs*{\int_{0}^{x}yf(y)\dy}^{r}\dx\biggr)^{1/r}+C\biggl(\int_{2^{n}R}^{2^{n_{*}+1}R}x^{r(\beta-1-2\rho)}\dx\biggr)^{1/r}\\*
  +C\biggl(\int_{2{n}R}^{2^{n_{*}+1}R}x^{r(\beta-1)}\abs[\bigg]{\int_{x/2}^{x}f(y)\frac{(x-y)^{-\beta-\nu}}{(1+(x-y))^{\rho-\alpha-\beta-\nu}}\dy}^{r}\dx\biggr)^{1/r}\\*
  +C\biggl(\int_{2^{n}R}^{2^{n_{*}+1}R}x^{r(\alpha-1)}\abs[\bigg]{\int_{x/2}^{x}f(y)\frac{(x-y)^{-\nu}}{(1+(x-y))^{\rho-\beta-\nu}}\dy}^{r}\dx\biggr)^{1/r}.
 \end{multline*}
Since $\int_{0}^{x}yf(y)\dy\leq Cx^{1-\rho}$ according to Proposition~\ref{Prop:tail:averaged} or~\ref{Prop:bounded:kernel} respectively, we obtain together with Lemma~\ref{Lem:Young} for $1/p+1/q=1/r+1$ that 
\begin{multline*}
  \norm{f}_{L^{r}(2^{n}R,2^{n_{*}+1}R)}\leq CR^{-1-\rho+\frac{1}{r}}+CR^{\beta-1-2\rho+\frac{1}{r}}\\*
  +CR^{\beta-1}\norm{f}_{L^{q}(2^{n-1}R,2^{n_{*}+1}R)}\norm*{\frac{z^{-\beta-\nu}}{(1+z)^{\rho-\alpha-\beta-\nu}}}_{L^{p}(0,2^{n_{*}+1}R)}\\*
  +CR^{\alpha-1}\norm{f}_{L^{q}(2^{n-1}R,2^{n_{*}+1}R)}\norm*{\frac{z^{-\nu}}{(1+z)^{\rho-\beta-\nu}}}_{L^{p}(0,2^{n_{*}+1}R)}.
 \end{multline*}
Since $\rho>\beta$ by assumption and we also assume $R\geq 1$, we have $R^{\beta-\rho}\leq 1$ such that we finally obtain
\begin{multline}\label{eq:mix:decay:3}
  \norm{f}_{L^{r}(2^{n}R,2^{n_{*}+1}R)}\\*
  \leq CR^{-1-\rho+\frac{1}{r}}+CR^{\beta-1}\norm{f}_{L^{q}(2^{n-1}R,2^{n_{*}+1}R)}\norm*{\frac{z^{-\beta-\nu}}{(1+z)^{\rho-\alpha-\beta-\nu}}}_{L^{p}(0,2^{n_{*}+1}R)}\\*
  +CR^{\alpha-1}\norm{f}_{L^{q}(2^{n-1}R,2^{n_{*}+1}R)}\norm*{\frac{z^{-\nu}}{(1+z)^{\rho-\beta-\nu}}}_{L^{p}(0,2^{n_{*}+1}R)}.
 \end{multline}
We now fix the parameter $p>1$ such that it holds
\begin{equation}\label{eq:choice:p}
 p=1+1/n_{*} \quad \text{with } n_{*}\in\N \text{ large enough such that}\quad p<\frac{1}{\beta+\nu}.
\end{equation}
Note that from the choice of $\nu$, we have $1/(\beta+\nu)>1$. Moreover, since $\beta\geq 0$ we have in particular that $p\nu<1$. With this, we find for $R\geq 1$ that
\begin{equation}\label{eq:mix:decay:4}
 \begin{split}
  \norm*{\frac{z^{-\beta-\nu}}{(1+z)^{\rho-\alpha-\beta-\nu}}}_{L^{p}(0,2^{n_{*}+1}R)}&\leq C\biggl(\int_{0}^{1}z^{-p(\beta+\nu)}\dz+\int_{1}^{2^{n_{*}+1}R}z^{p(\alpha-\rho)}\dz\biggr)^{1/p}\\
  &\leq C\bigl(1+R^{\alpha-\rho+\frac{1}{p}}\bigr)\leq CR^{\max\{\alpha-\rho+1/p,0\}}.
 \end{split}
\end{equation}
Similarly, we get
\begin{equation}\label{eq:mix:decay:5}
 \begin{split}
  \norm*{\frac{z^{-\nu}}{(1+z)^{\rho-\beta-\nu}}}_{L^{p}(0,2^{n_{*}+1}R)}
  \leq C R^{\max\{\beta-\rho+1/p,0\}}.
 \end{split}
\end{equation}
Combining~\cref{eq:mix:decay:3,eq:mix:decay:4,eq:mix:decay:5} and $R\geq 1$ we find
\begin{multline*}
  \norm{f}_{L^{r}(2^{n}R,2^{n_{*}+1}R)}\leq CR^{-1-\rho+\frac{1}{r}}\\*
  +C\bigl(R^{\max\{\lambda-\rho-1+1/p,\beta-1\}}+R^{\max\{\lambda-\rho-1+1/p,\alpha-1\}}\bigr)\norm{f}_{L^{q}(2^{n-1}R,2^{n_{*}+1}R)}.
 \end{multline*}
We now define $\chi\vcc=\max\{\lambda-\rho-1+1/p,\beta-1\}$ and note that it holds $\chi<0$ since $\beta<1$, $p>1$ and $\rho>\lambda$. Moreover, we get $R^{\max\{\lambda-\rho-1+1/p,\alpha-1\}}\leq R^{\chi}$ because $\alpha\leq \beta$ and $R\geq 1$. Thus, it further follows
\begin{equation}\label{eq:mix:decay:7}
 \norm{f}_{L^{r}(2^{n}R,2^{n_{*}+1}R)}\leq CR^{-1-\rho+\frac{1}{r}}+CR^{\chi}\norm{f}_{L^{q}(2^{n-1}R,2^{n_{*}+1}R)}.
\end{equation}
We now use this estimate iteratively until we we reach $r=\infty$. Thus, as a first step, we choose $n=0$, $q_{0}=1$ and $r_{0}=p$ and recall from~\eqref{eq:est:L1:norm} that \cref{Prop:tail:averaged,Prop:bounded:kernel} yield $\norm{f}_{L^{1}(R/2,2^{n_{*}+1}R)}\leq CR^{-\rho}$. Thus, we obtain from~\eqref{eq:mix:decay:7} that
\begin{equation}\label{eq:mix:decay:8}
 \norm{f}_{L^{r_{0}}(R,2^{n_{*}+1}R)}\leq CR^{-1-\rho+\frac{1}{r_{0}}}+CR^{\chi-\rho}.
\end{equation}
In the next step we take $n=1$ and $q_{1}=r_{0}$ which requires $r_{1}=p/(2-p)$. We then find together with~\cref{eq:mix:decay:7,eq:mix:decay:8} that
\begin{equation*}
 \begin{split}
  \norm{f}_{L^{r_{1}}(2R,2^{n_{*}+1}R)}&\leq CR^{-1-\rho+\frac{1}{r_{1}}}+CR^{\chi-1-\rho+\frac{1}{r_{0}}}+CR^{2\chi-\rho}\\
  &\leq CR^{-1-\rho+\frac{1}{r_{1}}}+CR^{2\chi-\rho}.
 \end{split}
\end{equation*}
In the last step we used that $\chi+1-1/p\leq 0$ which yields $\chi+1/r_{0}\leq 1/r_{1}$. If we iterate this procedure we obtain for $n<n_{*}$ and $q_{n}=r_{n-1}$ that 
\begin{equation*}
 \norm{f}_{L^{r_{n}}(2^{n}R,2^{n_{*}+1}R)}\leq CR^{-1-\rho+\frac{1}{r_{n}}}+CR^{(n+1)\chi-\rho}.
\end{equation*}
with $r_{n}=\frac{p}{1+n(1-p)}$. From the choice of $p$ in~\eqref{eq:choice:p} we have $n_{*}=1/(p-1)$ and thus $r_{n_{*}}=\infty$ which yields
\begin{equation}\label{eq:mix:decay:9}
 \norm{f}_{L^{\infty}(2^{n_{*}}R,2^{n_{*}+1}R)}\leq CR^{-1-\rho}+CR^{(n_{*}+1)\chi-\rho}.
\end{equation}
If $(n_{*}+1)\chi\leq -1$ this then already shows the estimate~\eqref{eq:decay:0} for $R_{*}=2^{n_{*}}$. 

On the other hand, if $(n_{*}+1)\chi> -1$ we have to iterate further. Precisely, we fix some constant $m\in\N$ such that 
\begin{equation*}
 (n_{*}+1)\chi+m\max\{\lambda-\rho,\beta-1\}\leq -1.
\end{equation*}
Then, we repeat the previous procedure with $\tilde{n}_{*}=n_{*}+m$ instead of $n_{*}$ which gives
\begin{equation}\label{eq:mix:decay:12}
 \norm{f}_{L^{\infty}(2^{n_{*}}R,2^{\tilde{n}_{*}+1}R)}\leq CR^{-1-\rho}+CR^{(n_{*}+1)\chi-\rho}.
\end{equation}
Moreover, we find that
\begin{equation}\label{eq:mix:decay:10}
 \begin{split}
  \norm[\bigg]{\frac{z^{-\beta-\nu}}{(1+z)^{\rho-\alpha-\beta-\nu}}}_{L^{1}(0,2^{\tilde{n}_{*}+1}R)}&\leq \biggl(\int_{0}^{1}z^{-\beta-\nu}\dz+\int_{1}^{2^{\tilde{n}_{*}+1}R}z^{\alpha-\rho}\dz\biggr)\\
  &\leq CR^{\max\{1+\alpha-\rho,0\}}.
 \end{split}
\end{equation}
Similarly, we get
\begin{equation}\label{eq:mix:decay:11}
 \begin{split}
  \norm[\bigg]{\frac{z^{-\nu}}{(1+z)^{\rho-\beta-\nu}}}_{L^{1}(0,2^{\tilde{n}_{*}+1}R)}\leq CR^{1+\beta-\rho}.
 \end{split}
\end{equation}
Thus, if we return to~\eqref{eq:mix:decay:3} and take $p=1$ and $r=q=\infty$ this yields together with~\cref{eq:mix:decay:10,eq:mix:decay:11} that
\begin{equation*}
 \begin{split}
  \norm{f}_{L^{\infty}(2^{n}R,2^{\tilde{n}_{*}+1}R)}&\leq CR^{-1-\rho}+C\bigl(R^{\max\{\lambda-\rho,\beta-1\}}+R^{\lambda-\rho}\bigr)\norm{f}_{L^{\infty}(2^{n-1}R,2^{\tilde{n}_{*}+1}R)}\\
  &\leq CR^{-1-\rho}+CR^{\max\{\lambda-\rho,\beta-1\}}\norm{f}_{L^{\infty}(2^{n-1}R,2^{\tilde{n}_{*}+1}R)}.
 \end{split}
\end{equation*}
If we now iterate this estimate $m$ times and use~\eqref{eq:mix:decay:12}, it follows
\begin{equation*}
 \begin{split}
  \norm{f}_{L^{\infty}(2^{n_{*}+m}R,2^{n_{*}+m+1}R)}&\leq CR^{-1-\rho}+CR^{m\max\{\lambda-\rho,\beta-1\}+(n_{*}+1)\chi-\rho}\leq CR^{-1-\rho}.
 \end{split}
\end{equation*}
The last step follows from the choice of $m$ together with the assumption $R\geq 1$. The estimate~\eqref{eq:decay:0} then directly follows with $R_{*}=2^{n_{*}+m}$. 

The upper bound~\eqref{eq:I:decay:0} on $I[f]$ is then an immediate consequence. Precisely, since the estimate~\eqref{eq:decay:0} directly implies $f(x)\leq Cx^{-1-\rho}$ for $x\geq R_{*}$ we deduce from \eqref{eq:mix:I} in combination with~\cref{eq:mix:decay:10,eq:mix:decay:11} that 
\begin{multline*}
  I[f](x)\leq Cx^{1+\beta-2\rho}+C\bigl(x^{1+\beta+\max\{1+\alpha-\rho,0\}}+x^{1+\alpha+1+\beta-\rho}\bigr)x^{-1-\rho}\\*
  =C\bigl(x^{\beta-\rho}+x^{\max\{\lambda-\rho,\beta-1\}}+x^{\lambda-\rho}\bigr)x^{1-\rho}.
 \end{multline*}
Thus, the estimate~\eqref{eq:I:decay:0} holds with $\delta=\rho-\beta$ since $\lambda=\alpha+\beta\leq \beta<\rho<1$.
\end{proof}

\subsection{Proof of Theorem~\ref{Thm:asymptotics}}

The proof of Theorem~\ref{Thm:asymptotics} follows now easily from Proposition~\ref{Prop:pointwise:decay}.

\begin{proof}[Proof of Theorem~\ref{Thm:asymptotics}]
 To show the continuity of self-similar profiles, one can proceed in the same way as in Lemma~4.2 of~\cite{NiV13a}, i.e.\@ one first proves that $f\in L^{\infty}_{\mathrm{loc}}((0,\infty))$ using similar estimates as in the proof of Proposition~\ref{Prop:pointwise:decay}. Once this is established, it is straightforward to show that $f$ is even continuous on $(0,\infty)$. Since the corresponding estimates are quite similar, we do not give further details on this here but only refer to~\cite{NiV13a} (see also Lemma~3.3 in~\cite{NTV16a}).
 
 Thus, it only remains to show how the asymptotic behaviour of self-similar profiles can be obtained from~\cref{Prop:asymptotics:averaged,Prop:pointwise:decay} (see also Proposition~3.6 in~\cite{NTV16a} and Lemma~4.3 in~\cite{NiV13a}).
 
 We recall that in terms of $I[f]$ the equation~\eqref{eq:self:sim} reads
 \begin{equation*}
  x^{2}f(x)=(1-\rho)\int_{0}^{x}yf(y)\dy+I[f](x).
 \end{equation*}
 This can be rewritten as
 \begin{equation}\label{eq:asymptotics}
  \frac{x^{1+\rho}f(x)}{1-\rho}=x^{\rho-1}\int_{0}^{x}yf(y)\dy+\frac{1}{1-\rho}x^{\rho-1}I[f](x).
 \end{equation}
 Due to Proposition~\ref{Prop:pointwise:decay} we have that $x^{\rho-1}I[f](x)\to 0$ as $x\to \infty$ while due to the rescaling in Proposition~\ref{Prop:asymptotics:averaged} it holds $x^{\rho-1}\int_{0}^{x}yf(y)\dy\to 1$ for $x\to\infty$. Thus, taking the limit $x\to \infty$ in~\eqref{eq:asymptotics} it follows
 \begin{equation*}
  \frac{x^{1+\rho}f(x)}{1-\rho}\longrightarrow 1\qquad \text{for }x\longrightarrow \infty.
 \end{equation*}
 This then finishes the proof.
\end{proof}

\section*{Acknowledgements} 
This work has been supported through the CRC 1060 \textit{The mathematics of emergent effects} at the University of Bonn that is funded through the German Science Foundation (DFG). Moreover, partial support through a Lichtenberg Professorship Grant of the VokswagenStiftung awarded to Christian Kühn is acknowledged.

\end{document}